\DeclareMathOperator{\RSS}{RSS}
\DeclareMathOperator{\trace}{tr}
\DeclareMathOperator*{\argmin}{argmin}
\def\P{\mathbb P}
\newcommand{\un}{{\mathbbm{1}}}
\newcommand{\E}{\mathbb E}
\newcommand{\iso}{\mathrm{iso}}
\newcommand{\anti}{\mathrm{anti}}
\newcommand{\Ve}{{\mathrm{Vect}}}
\newenvironment{proof}{{\bf Proof.}}{\hfill$\Box$\\}
\newtheorem{theorem}{Theorem}
\newtheorem{proposition}{Proposition}
\newcommand{\R}{\mathbb R}
\newcommand{\N}{\mathbb N}
\newtheorem{lem}{Lemma}
\begin{document}
\begin{center}

{\sc \Large Iterative Isotonic Regression \\
\vspace{0.2cm}
}

\vspace{0.5cm}

Arnaud GUYADER\footnote{Corresponding author.}\\
Universit\'e Rennes 2, INRIA and IRMAR \\
Campus de Villejean, Rennes, France\\
\textsf{arnaud.guyader@uhb.fr}
\vspace{0.5cm}

Nick HENGARTNER\\
Los Alamos National Laboratory\\
NM 87545, USA\\
\textsf{nickh@lanl.gov}
\vspace{0.5cm}

Nicolas J\'EGOU\\
Universit\'e Rennes 2\\
Campus de Villejean, Rennes, France\\
\textsf{nicolas.jegou@uhb.fr}
\vspace{0.5cm}

Eric MATZNER-L\O BER\\
Universit\'e Rennes 2\\
Campus de Villejean, Rennes, France\\
\textsf{eml@uhb.fr}

\end{center}

\begin{abstract}
\noindent {\rm This article introduces a new nonparametric method for estimating a univariate regression function of bounded variation. The method exploits the Jordan decomposition which states that a function of bounded variation can be decomposed as the sum of a non-decreasing function and a non-increasing function.  This suggests combining the backfitting algorithm for estimating additive functions with isotonic regression for estimating monotone functions.  The resulting iterative algorithm is called Iterative Isotonic Regression (\textsf{I.I.R.}). 
The main technical result in this paper is the consistency of the proposed estimator
when the number of iterations $k_n$ grows appropriately with the sample 
size $n$. The proof requires two auxiliary results that are of interest in and by themselves: firstly, we generalize the well-known consistency property of isotonic regression to the framework of a non-monotone regression function, and secondly, we relate the backfitting algorithm to Von Neumann's algorithm in convex analysis. 

\medskip

\noindent \emph{Index Terms} ---  Nonparametric statistics, isotonic regression, additive models, metric projection onto convex cones.

\medskip

\noindent \emph{2010 Mathematics Subject Classification}: 52A05, 62G08, 62G20.}

\end{abstract}

\section{Introduction}
Consider the  regression model 
\begin{equation}\label{eq:general-model}
Y=r(X)+\varepsilon 
\end{equation}  
where $X$ and $Y$ are real-valued random variables, with $X$ distributed according to a non-atomic law $\mu$ on $[0,1]$, $\E\left[Y^2\right]<\infty$ and $\E\left[\varepsilon|X\right]=0$. We want to estimate the regression function $r$, assuming it is of bounded variation. Since $\mu$ is non-atomic, we will further assume, without loss of generality, that $r$ is right-continuous. The Jordan decomposition states that $r$ can be written as the sum of a non-decreasing function $u$ and a non-increasing function $b$
\begin{equation}\label{eq:jordan-decomposition}
r(x)=u(x)+b(x).
\end{equation}
The underlying idea of the estimator that we introduce in this paper consists in viewing this decomposition as an additive model involving the increasing and the decreasing parts of $r$. This leads us to propose an ``Iterative Isotonic Regression'' estimator (abbreviated to \textsf{I.I.R.}) that combines the isotonic regression and backfitting algorithms, two well-established algorithms for  estimating monotone functions and  additive models, respectively.\\

\noindent The Jordan decomposition (\ref{eq:jordan-decomposition}) is not unique in general. However, if one requires that both terms on the right-hand side have singular associated Stieltjes measures and that 
\begin{equation}\label{iasbc}
\int_{[0,1]}r(x)\mu(dx)=\int_{[0,1]}u(x)\mu(dx),
\end{equation}
then the decomposition is unique and the model is identifiable. Let us emphasize that, from a statistical point of view, our assumption on $r$ is mild. The classical counterexample of a function that is not of bounded variation is $r(x)=\sin(1/x)$ for $x\in(0,1]$, with $r(0)=0$. \\

\noindent Estimating a monotone regression function is the archetypical shape restriction estimation problem. Specifically, assume that the regression function $r$ in (\ref{eq:general-model}) is non-decreasing, and suppose we are given a sample ${\cal D}_n=\{(X_1,Y_1),\ldots,(X_n,Y_n)\}$  of i.i.d. $\R\times\R$ valued random variables distributed as a generic pair $(X,Y)$. Then denote $x_1=X_{(1)}<\ldots< x_n=X_{(n)},$ the ordered sample and $y_1,\ldots, y_n$ the corresponding observations. In this framework, the Pool-Adjacent-Violators Algorithm (PAVA) determines a collection of non-decreasing level sets solution to the least square minimization problem
\begin{equation}\label{eq:min1}
\min_{u_1\leq \ldots \leq u_n} \frac{1}{n}\sum_{i=1}^n \left(y_i-u_i\right)^2.
\end{equation} 
These estimators have raised great interest in the literature for decades since they are nonparametric, data driven and easy to implement. Early work on the maximum likelihood estimators of distribution parameters subject to order restriction date back to the 50's, starting with Ayer \textit{et al.} \cite{ayer1955empirical} and Brunk \cite{brunk1955maximum}.  Comprehensive treatises on isotonic regression include Barlow \textit{et al.} \cite{barlow1972statistical} and Robertson \textit{et al.} \cite{robertson1988order}. For improvements and extensions of the PAVA approach to more general order restrictions, see Best and Chakravarti \cite{BestChakravarti1990}, Dykstra \cite{dykstra1981isotonic}, and Lee \cite{lee1983min}, among others.\\

\noindent The solution of (\ref{eq:min1}) can be seen as the metric projection, with respect to the Euclidean norm, of the vector $y=(y_1,\ldots,y_n)$  on the isotone cone ${\cal C}^+_n$
\begin{equation}\label{eq:isotone-cone}
{\cal C}^+_n=\left\{u=\left(u_1,\ldots,u_n\right)\in \R^n:u_1\leq \ldots\leq u_n\right\}.
\end{equation}
That projection is not linear, which is the reason why analyzing these estimators is technically challenging.\\

Interestingly, one can interpret the isotonic regression estimator as the slope of a convex approximation of the primitive integral of $r$. This leads to an explicit relation between $y$ and the vector of the adjusted values, known as the ``min-max formulas'' (see Anevski and Soulier \cite{anevski2011monotone} for a rigorous justification). This point of view plays a key role in the study of the asymptotic behavior of isotonic regression. The consistency of the estimator was established by Brunk \cite{brunk1955maximum} and Hanson \textit{et al.} \cite{hanson1973consistency}. Brunk \cite{brunk1970estimation} proved its cube-root convergence at a fixed point and obtained the pointwise asymptotic distribution, and Durot \cite{durot2007error} provided a central limit theorem for the $L_{p}$-error.\\

\noindent Let us now discuss the additive aspect of the model. In a multivariate setting, the additive model was originally suggested by Friedman and Stuetzle \cite{friedman1981projection} and popularized by Hastie and Tibshirani \cite{hastie1990generalized} as a way to accommodate the so-called curse of dimensionality. The underlying idea of additive models is to approximate a high dimension regression function $r:\R^d \rightarrow \R$ by a sum of one-dimensional univariate functions, that is
\begin{equation}\label{eq:additive-models}
r(\textbf{X})=\sum_{j=1}^dr_j(X^{j}).
\end{equation}
Not only do additive models provide a logical extension of the standard linear regression model which facilitates the interpretation, but they also achieve  optimal rates of convergence that do not depend on the dimension $d$ (see Stone \cite{stone1985additive}).\\ 

\noindent Buja \textit{et al.} \cite{buja1989linear} proposed the backfitting algorithm as a practical method for estimating additive models. It consists in iteratively fitting the partial residuals from earlier steps until convergence is achieved. Specifically, if the current estimates are $\hat{r}_1,\ldots, \hat{r}_d$, then $\hat{r}_j$ is updated by smoothing $y-\sum_{k\neq j} \hat{r}_k$ against $X^{j}$. The backfitted estimators have mainly been studied in the case of linear smoothers. H{\"a}rdle and Hall \cite{hardle1993backfitting} showed that when all the smoothers are orthogonal projections, the whole algorithm can be replaced by a global projection operator. Opsomer and Ruppert \cite{opsomer1997fitting}, and Opsomer \cite{opsomer2000asymptotic}, gave asymptotic bias and variance expressions in the context of additive models fitted by local polynomial regression. Mammen, Linton and Nielsen \cite{mammen1999existence} improved these results by deriving a backfitting procedure that achieves the oracle efficiency (that is, each component can be estimated as well as if the other components were known). This procedure was extended  to several different one-dimensional smoothers including kernel, local polynomials and splines by Horowitz, Klemel{\"a} and Mammen \cite{horowitz2006optimal}. Alternative estimation procedures for additive models have been considered by Kim, Linton and Hengartner \cite{klh}, and by Hengartner and Sperlich \cite{hs}.\\ 

\noindent In the present context, we propose to apply the backfitting algorithm to decompose a univariate function by alternating isotonic and antitonic regressions on the partial residuals in order to estimate the additive components $u$ and $b$ of the Jordan decomposition (\ref{eq:jordan-decomposition}). The finite sample behavior of this estimator has been studied in a related paper by Guyader {\it et al.} (see \cite{guyader2012}). Among other results, it is stated that the sequence of estimators obtained in this way converges to an interpolant of the raw data (see section \ref{sec:iir} below for details).\\  

\noindent Backfitted estimators in a non-linear case have also been studied by Mammen and Yu \cite{mammen2007additive}. Specifically, assuming that the regression function $r$ in (\ref{eq:additive-models}) is an additive function of isotonic one-dimensional functions $r_j$, they estimate each additive component by iterating the PAVA in a backfitting fashion. 
 Moreover, Mammen and Yu show that, as in the linear case, their estimator achieves the oracle efficiency and, in each direction, they recover the limit distribution exhibited by Brunk \cite{brunk1970estimation}.\\

\noindent The main result addressed in this paper states the consistency of our \textsf{I.I.R.} estimator. Denoting $\hat{r}_n^{(k)}$ the Iterative Isotonic Regression estimator resulting from $k$ iterations of the algorithm, we prove the existence of a sequence of iterations $(k_n)$, increasing with the sample size $n$, such that 
\begin{equation}\label{eq:existing-sequence1}
\E\left[\|\hat{r}_n^{(k_n)}-r\|^2\right]\underset{n\rightarrow \infty}{\longrightarrow}0
\end{equation}
where $\|.\|$ is the quadratic norm with respect to the law $\mu$ of $X$. Our analysis identifies two error terms: an estimation error that comes from the isotonic regression, and an approximation error that is governed by the number of iterations $k$.\\

\noindent Concerning the estimation error, we wish to emphasize that all asymptotic results about isotonic regression mentioned above assume monotonicity of the regression function $r$. In our context, at each stage of the iterative process, we apply an isotonic regression to an arbitrary function (of bounded variation). As a result, we prove in Section \ref{sec:isotonic-consistency} the $L_2(\mu)$ consistency of isotonic regression for the metric projection of $r$ onto the cone of increasing functions (see Theorem \ref{thm:iso-consistency}). \\

\noindent  The approximation term can be controlled by increasing the number of iterations.  This is made possible thanks to the interpretation of \textsf{I.I.R.} as a Von Neumann's algorithm, and by applying related results in convex analysis (see Proposition \ref{pro:approximation}). Putting estimation and approximation errors together finally leads to the consistency result (\ref{eq:existing-sequence1}).\\

\noindent Let us remark that, as far as we know, rates of convergence of Von Neumann's algorithm have not yet been studied in the context of  bounded variation functions. Hence, at this time, it seems difficult to establish rates of convergence for our estimator without further restrictions on the shape of the underlying regression function. Thus, the results we present here may be considered as a starting point in the study of novel methods which would consist in applying isotonic regression with no particular shape assumption on the regression function.\\

\noindent The remainder of the paper is organised as follows. We first give  further details and notations about the construction of \textsf{I.I.R.} in Section \ref{sec:iir}. The general consistency result for isotonic regression is given in Section \ref{sec:isotonic-consistency}. The main result of this article, the consistency of \textsf{I.I.R.}, is established in Section \ref{sec:iir-consistency}. Most of the proofs are postponed to Section \ref{sec:appendix}, while related technical results are gathered in Section \ref{tech}.          

\section{The \textsf{I.I.R.} procedure}\label{sec:iir}

Denote by $y=(y_1,\ldots,y_n)$ the vector of observations corresponding to the ordered sample $x_1=X_{(1)}<\ldots<X_{(n)}=x_n$. We implicitly assume in this writing that the law $\mu$ of $X$ has no atoms. We denote by $\iso(y)$ (resp. $\anti(y)$) the metric projection of $y$ with respect to the Euclidean norm onto the isotone cone ${\cal C}_n^+$ (resp. ${\cal C}_n^-=-{\cal C}_n^+$) defined in (\ref{eq:isotone-cone}):   
\begin{align*}
\iso(y)=\argmin_{u\in{\cal C}_n^+}\frac{1}{n}\sum_{i=1}^n\left(y_i-u_i\right)^2=\argmin_{u\in{\cal C}_n^+}\|y-u\|^2_n\\
\anti(y)=\argmin_{b\in{\cal C}_n^-}\frac{1}{n}\sum_{i=1}^n\left(y_i-b_i\right)^2=\argmin_{b\in{\cal C}_n^-}\|y-b\|^2_n.
\end{align*}
The backfitting algorithm consists in updating each component by smoothing the partial residuals, i.e., the residuals resulting from the current estimate in the other direction. Thus the Iterative Isotonic Regression algorithm goes like this: 
\begin{algorithm}[H]                      
\caption{Iterative Isotonic Regression (\textsf{I.I.R.})}          
\label{algo:iir}                           
\begin{algorithmic}                   
\STATE (1) Initialization: $\hat{b}^{(0)}_n=\left(\hat{b}^{(0)}_1[1],\ldots,\hat{b}^{(0)}_n[n]\right)=0$
\STATE (2) Cycle: for $k\geq 1$\\
\begin{equation*}\label{eq:cycle}
\begin{array}{ll}
\hat{u}^{(k)}_n&=\iso\left(y-\hat{b}_n^{(k-1)}\right)\\
\hat{b}^{(k)}_n&=\anti\left(y-\hat{u}_n^{(k)}\right)\\
\hat{r}^{(k)}_n&=\hat{u}_n^{(k)}+\hat{b}_n^{(k)}.
\end{array}
\end{equation*}
\STATE (3) Iterate (2) until a stopping condition to be specified is achieved.   
\end{algorithmic}
\end{algorithm}

\noindent Guyader \textit{et al.} \cite{guyader2012} prove that the terms of the decomposition $\hat{r}_n^{(k)}=\hat{u}_n^{(k)}+\hat{b}_n^{(k)}$ have singular Stieltjes measures. Furthermore, by starting with isotonic regression, the terms $\hat{u}_n^{(k)}$ have all the same empirical mean as the original data $y$, while all the $\hat{b}_n^{(k)}$ are centered. Hence, for each $k$, the decomposition  $\hat{r}_n^{(k)}=\hat{u}_n^{(k)}+\hat{b}_n^{(k)}$ satisfies the condition (\ref{iasbc}), and that decomposition is unique (identifiable).\\

\noindent Algorithm \ref{algo:iir} furnishes vectors of adjusted values. In the following, we will consider one-to-one mappings between such vectors and piecewise functions defined on the interval $[0,1]$. For example, the vector $\hat{u}_n^{(k)}=(\hat{u}_n^{(k)}[1],\ldots,\hat{u}_n^{(k)}[n])$ is associated to the real-valued function $\hat{u}_n^{(k)}$ defined on $[0,1]$ by 
\begin{equation}\label{eq:prolongement}
\hat{u}_n^{(k)}(x)=\hat{u}_n^{(k)}[1]\un_{[0,X_{(2)})}(x)+\sum_{i=2}^{n-1}\hat{u}_n^{(k)}[i]\un_{[X_{(i)},X_{(i+1)})}(x)+\hat{u}_n^{(k)}[n]\un_{[X_{(n)},1]}(x).
\end{equation} 
Observe that our definition of $\hat{u}_n^{(k)}(x)$ makes it right-continuous. Obviously, equivalent formulations hold for $\hat{b}_n^{(k)}$ and $\hat{r}_n^{(k)}$ as well.\\

\noindent Figure \ref{fig:iir-interpolation} illustrates the application of \textsf{I.I.R.} on an example. The top left-hand side displays the regression function $r$, and $n=100$ points $(x_i,y_i)$, with $y_i=r(x_i)+\varepsilon_i$, where the $\varepsilon_i$'s are Gaussian centered random variables. The three other figures show the estimations $\hat{r}_n^{(k)}$ obtained on this sample for $k=1,10$, and $1,000$ iterations. According to (\ref{eq:prolongement}), our method fits a piecewise constant function. Moreover, increasing the number of iterations tends to increase the number of jumps. 

\begin{figure}[H]
\begin{center}
\input{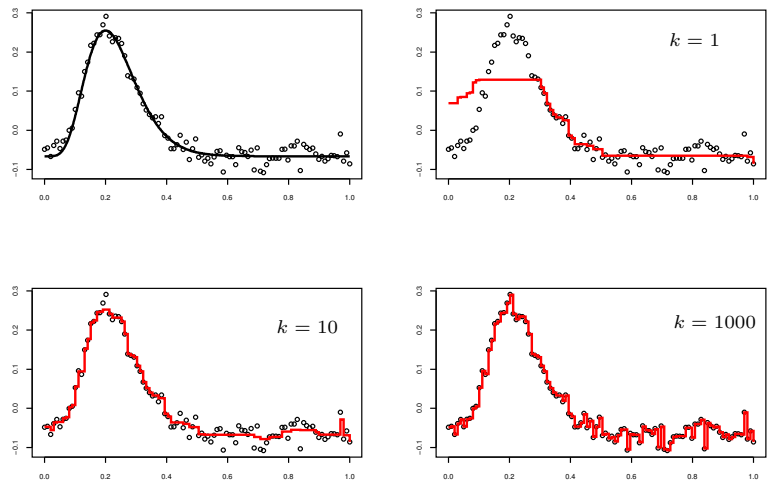}
\end{center}
\caption{Application of the \textsf{I.I.R.} algorithm for $k=1,10$, and $1,000$ iterations.}
\label{fig:iir-interpolation}
\end{figure}
\noindent The bottom right figure illustrates that, as established in Guyader \textit{et al.} \cite{guyader2012}, for fixed sample size $n$, the function $\hat{r}_n^{(k)}(x)$ converges to an interpolant of the data when the number of iterations $k$ tends to infinity, {\it i.e.}, for all $i=1,\dots,n$,
\begin{equation*}
\lim_{k\rightarrow \infty}\hat{r}_n^{(k)}(x_i)=y_i.
\end{equation*}

\noindent One interpretation of the above result is that increasing the number of iterations leads to overfitting. Thus, iterating the procedure until convergence is not desirable. On the other hand, as illustrated on figure \ref{fig:iir-interpolation}, iterations beyond the first step typically improve the fit. This suggests that we need to couple the \textsf{I.I.R.} algorithm with a stopping rule. In this respect, two important remarks are in order. Firstly, since equation (\ref{eq:prolongement}) enables predictions at arbitrary locations $x\in [0,1]$, all the standard data-splitting techniques can be applied to stop the algorithm.\\

\noindent Secondly, the choice of a stopping criterion as a model selection suggests stopping rules based on Akaike Information Criterion, Bayesian Information Criterion or Generalized Cross Validation. These criteria can be written in the generic form   
\begin{equation}\label{eq:stop-commonform}
\argmin_{p}\left\{\log \frac{1}{n}\RSS(p)+\phi(p)\right\}.
\end{equation}\\ 
Here, $\RSS$ denotes the residual sum of squares and $\phi$ is an increasing function. The parameter $p$ stands for the number (or equivalent number) of parameters. For isotonic regression, we refer to Meyer and  Woodroofe \cite{meyer2000} to consider that the number of jumps provides the effective dimension of the model. Therefore, a natural extension for \textsf{I.I.R.} is to replace $p$ by the number of jumps of $\hat{r}_n^{(k)}$ in (\ref{eq:stop-commonform}). The comparisons of these criteria and the practical behavior of the \textsf{I.I.R.} procedure will be addressed elsewhere by the authors.

\section{Isotonic regression: a general result of consistency}\label{sec:isotonic-consistency}
In this section, we focus on the first half step of the algorithm, which consists in applying isotonic regression to the original data. To simplify the notations, we omit in this section the exponent related to the number of iterations $k$, and  simply denote $\hat{u}_n$ the isotonic regression on the data, that is,
\begin{equation*}
\hat{u}_n=\argmin_{u\in{\cal C}^{+}_n}\|y-u\|_n=\argmin_{u\in{\cal C}_n^+}\frac{1}{n}\sum_{i=1}^n\left(y_i-u_i\right)^2.
\end{equation*}
Let $u_+$ denote the closest non-decreasing function to the regression function $r$ with respect to the $L_2(\mu)$ norm. Thus, $u_+$ is defined as 
\begin{equation*}\label{eq:u-plus}
u_+=\argmin_{u\in {\cal C}^+}\|r-u\|=\argmin_{u\in {\cal C}^+}\int_{[0,1]}(r(x)-u(x))^2\mu(dx),
\end{equation*} 
where ${\cal C}^+$ denotes the cone of non-decreasing functions in $L_2(\mu)$. Since ${\cal C}^+$ is closed and convex, the metric projection $u_+$ exists and is unique in $L_2(\mu)$.\\ 

\noindent For mathematical purpose, we also introduce $u_n$, the result from applying isotonic regression to the sample $\left(x_i,r(x_i)\right)$, $i=1,\ldots,n$, that is
\begin{equation}\label{eq:u-n}
u_n=\argmin_{u\in {\cal C}_n^+}\|r-u\|_n=\argmin_{u\in{\cal C}_n^+}\frac{1}{n}\sum_{i=1}^n\left(r(x_i)-u_i\right)^2.
\end{equation}
Finally, we note that, since $r$ is bounded, so are $u_+$ and $u_n$, independently of the sample size $n$ (see for example Lemma 2 in Anevski and Soulier \cite{anevski2011monotone}). Figure \ref{fig:cons-iso} displays the three terms involved.
\begin{figure}[H]
\begin{center}
\input{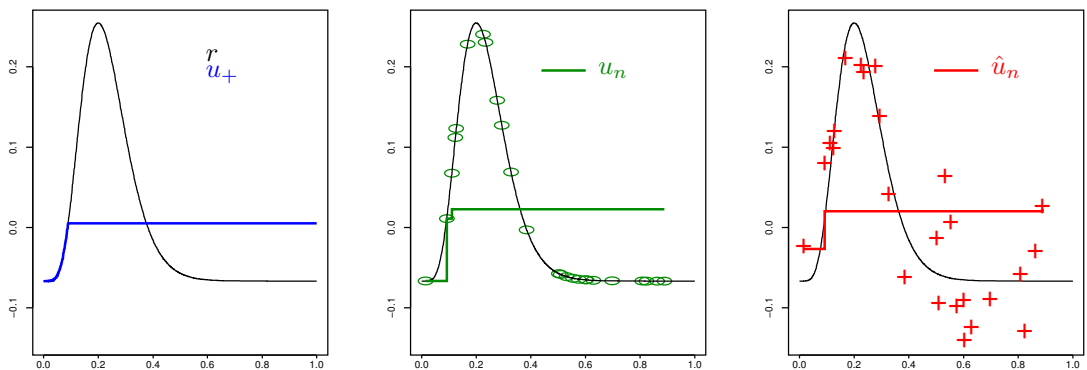}
\end{center}
\caption{Isotonic regression on a non-monotone regression function.}
\label{fig:cons-iso}
\end{figure}
\noindent The main result of this section states that 
\begin{equation*}
\E\left[\|\hat{u}_n-u_+\|^2\right] \underset{n\rightarrow \infty}{\longrightarrow}0,
\end{equation*}
where the expectation is taken with respect to the sample ${\cal D}_n$. Our analysis decomposes $\|\hat{u}_n-u_+\|$ into two distinct terms:
\begin{equation*}\label{eq:bv-theorem}
\|\hat{u}_n-u_+\|\leq \|\hat{u}_n-u_n\|+\|u_n-u_+\|.
\end{equation*}
As $\|u_n-u_+\|$ does not depend on the response variable $Y_i$, one could interpret it as a bias term, whereas $\|\hat{u}_n-u_n\|$ plays the role of a variance term.\\

\noindent Throughout this section, our results are stated for both the empirical norm $\|.\|_n$ and the $L_2(\mu)$ norm $\|.\|$, as both are informative. The following proposition states the convergence of the bias term (its proof is postponed to Section \ref{sec:biais-iso}).
\begin{proposition}\label{lem:biais-iso}
With the previous notations, we have 
$$\lim_{n\rightarrow \infty}\|u_n-u_+\|_n=0 \qquad a.s.,$$
and 
$$\lim_{n\rightarrow \infty}\|u_n-u_+\|=0 \qquad a.s.$$
\end{proposition}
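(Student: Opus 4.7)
The strategy is to sandwich $u_n$ between $r$ and a natural sample-point restriction of $u_+$, exploiting that $u_n$ and $u_+$ are orthogonal projections of $r$ onto nested closed convex cones. Set $u_+^{(n)}:=(u_+(x_1),\ldots,u_+(x_n))$. Since $u_+$ is non-decreasing, $u_+^{(n)} \in \mathcal{C}_n^+$, and its piecewise-constant extension via (\ref{eq:prolongement}) coincides with $u_+$ at every sample point, so $\|u_+^{(n)}-u_+\|_n = 0$.

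For the empirical norm, the Pythagorean inequality for orthogonal projections onto a closed convex set in a Hilbert space gives $\|u_n-v\|_n^2 \le \|r-v\|_n^2 - \|r-u_n\|_n^2$ for every $v \in \mathcal{C}_n^+$; choosing $v=u_+^{(n)}$ yields
\begin{equation*}
\|u_n - u_+\|_n^2 \;\le\; \|r - u_+^{(n)}\|_n^2 - \|r - u_n\|_n^2.
\end{equation*}
The first right-hand term equals $\frac{1}{n}\sum_{i=1}^n (r(x_i)-u_+(x_i))^2$, which tends a.s.\ to $\|r-u_+\|^2$ by the strong law of large numbers, since both $r$ and $u_+$ are bounded. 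The second is at most $\|r-u_+^{(n)}\|_n^2$ by optimality of $u_n$ on $\mathcal{C}_n^+$, but I also need an asymptotic lower bound. For this I pick $M$ so that $\max(\|r\|_\infty,\|u_+\|_\infty,\sup_n\|u_n\|_\infty) \le M$ --- a uniform bound noted in the paragraph preceding the statement --- and let $\mathcal{C}_M^+$ be the class of non-decreasing functions on $[0,1]$ bounded by $M$ in sup-norm. This is a classical Glivenko--Cantelli class (its bracketing entropy at scale $\varepsilon$ is of order $1/\varepsilon$), hence so is the class of squared losses, and
\begin{equation*}
\epsilon_n := \sup_{u\in\mathcal{C}_M^+}\bigl|\|r-u\|_n^2-\|r-u\|^2\bigr| \xrightarrow[n\to\infty]{a.s.} 0.
\end{equation*}
Since $u_n \in \mathcal{C}_M^+ \subset \mathcal{C}^+$ and $u_+$ is the $L_2(\mu)$-projection of $r$ onto $\mathcal{C}^+$, this gives $\|r-u_n\|_n^2 \ge \|r-u_n\|^2 - \epsilon_n \ge \|r-u_+\|^2 - \epsilon_n$. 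Plugging back, the upper bound on $\|u_n-u_+\|_n^2$ vanishes almost surely.

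The $L_2(\mu)$ statement then follows by a symmetric argument: the projection inequality in $L_2(\mu)$ yields $\|u_n-u_+\|^2 \le \|r-u_n\|^2 - \|r-u_+\|^2$, and by the same uniform bound $\|r-u_n\|^2 \le \|r-u_n\|_n^2 + \epsilon_n \le \|r-u_+^{(n)}\|_n^2 + \epsilon_n \to \|r-u_+\|^2$ a.s. The main obstacle is thus the uniform Glivenko--Cantelli bound $\epsilon_n \to 0$ over bounded monotone functions: it is the one genuinely non-trivial ingredient beyond algebraic manipulations of projections, and a self-contained proof can be obtained by discretising on a fine grid of quantiles of $\mu$ and exploiting monotonicity to control the oscillation of any $u \in \mathcal{C}_M^+$ between two consecutive grid points.
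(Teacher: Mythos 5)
Your argument is correct, and its deterministic half takes a genuinely different route from the paper's. The paper first proves $\|r-u_n\|_n \to \|r-u_+\|$ a.s.\ by a limsup/liminf sandwich (optimality of $u_n$ in the empirical norm plus a Hoeffding bound for the fixed pair $(r,u_+)$ on one side, optimality of $u_+$ in $L_2(\mu)$ plus a maximal inequality over the bounded monotone class on the other), then transfers this to $\|r-u_n\|\to\|r-u_+\|$ and converts convergence of distances into $\|u_n-u_+\|\to 0$ via the parallelogram law with the midpoint $m_n=(u_n+u_+)/2$; the empirical-norm conclusion is then obtained from a further uniform deviation bound. You instead use the strong obtuse-angle inequality for metric projections onto a closed convex cone, $\|P_C r - v\|^2 \le \|r-v\|^2-\|r-P_C r\|^2$ for all $v$ in the cone (which is indeed valid, by the variational characterization $\langle r-P_Cr, v-P_Cr\rangle\le 0$), applied once in the empirical Hilbert space with $v=u_+^{(n)}$ and once in $L_2(\mu)$ with $v=u_n$; this bounds $\|u_n-u_+\|_n^2$ and $\|u_n-u_+\|^2$ directly by differences of squared distances, is slightly sharper than the parallelogram step, and lets you dispense both with the explicit convergence of $\|r-u_n\|_n$ and with the paper's uniform bound over \emph{pairs} of monotone functions (Lemma \ref{lem:annexe-concentration3}). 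The probabilistic ingredients are the same in substance: an LLN for the fixed function $u_+$ (your SLLN; the paper's Lemma \ref{lem:annexe-concentration1}) and a uniform LLN over uniformly bounded monotone functions (your Glivenko--Cantelli bound $\epsilon_n\to 0$; the paper's covering-number/Hoeffding Lemma \ref{lem:annexe-concentration2} with Borel--Cantelli). Your bracketing-entropy or quantile-discretization justification of $\epsilon_n\to 0$ is standard and sufficient here, since only almost sure convergence is needed for this proposition; note, however, that the paper's exponential versions of these bounds are reused later (e.g.\ in the proof of Proposition \ref{lem:variance-iso} and to pass to expectations), so they are not redundant in the larger scheme. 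Two small points you should make explicit: the extension of $u_n$ by (\ref{eq:prolongement}) is non-decreasing and bounded by $\|r\|_\infty$ (so it really lies in your fixed class $\mathcal{C}_M^+$, which is what legitimizes plugging the random $u_n$ into the supremum defining $\epsilon_n$), and $u_+^{(n)}\in\mathcal{C}_n^+$ with $\|u_+^{(n)}-u_+\|_n=0$ because the empirical norm only evaluates at the sample points.
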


\noindent Applying Lebesgue's dominated convergence Theorem ensures that both
$$\lim_{n\rightarrow \infty}\E\left[\|u_n-u_+\|_n^2\right]=0\qquad \textrm{and} \qquad\lim_{n\rightarrow \infty}\E\left[\|u_n-u_+\|^2\right]=0.$$
Analysis of the variance term requires that we assume that the noise $\varepsilon$ is bounded. It then follows from Anevski and Soulier \cite{anevski2011monotone} that $\hat{u}_n$ is bounded, independently of the sample size $n$. The proof of the following result is given in Section \ref{sec:variance-iso}). 
\begin{proposition}\label{lem:variance-iso}
Assume that the random variable $\varepsilon$ is bounded, then we have
$$\lim_{n\rightarrow \infty}\E\left[\|\hat{u}_n-u_n\|_n^2\right]=0,$$
and 
$$\lim_{n\rightarrow \infty}\E\left[\|\hat{u}_n-u_n\|^2\right]=0.$$
\end{proposition}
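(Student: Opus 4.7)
My plan is to derive a ``basic inequality'' from the variational characterization of the two projections, then reduce the resulting cross-term to a single empirical process controlled by a Glivenko-Cantelli argument. Both $\hat u_n$ and $u_n$ are metric projections onto the \emph{same} convex cone $\mathcal C_n^+$ (of $y$ and of $(r(x_1),\dots,r(x_n))$ respectively), so the first-order optimality inequalities $\langle y-\hat u_n,\,v-\hat u_n\rangle_n\le 0$ and $\langle r-u_n,\,v-u_n\rangle_n\le 0$ hold for every $v\in\mathcal C_n^+$. Testing the first with $v=u_n$ and the second with $v=\hat u_n$ and adding them, the $r$-terms cancel and I obtain
\begin{equation*}
\|\hat u_n-u_n\|_n^2 \;\le\; \langle \varepsilon,\,\hat u_n-u_n\rangle_n.
\end{equation*}

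To control the right-hand side I exploit the monotonicity of $\hat u_n$ and $u_n$. By the boundedness of $r$ and $\varepsilon$, both estimators are uniformly bounded in sup-norm by a constant $M$ independent of $n$ (as already noted in the paragraph preceding the proposition). For any non-decreasing function $v$ on $[0,1]$ with $|v|\le M$, the layer-cake representation of $v+M$ combined with the fact that every upper level set of a monotone function is a half-line yields
\begin{equation*}
\bigl|\langle \varepsilon, v\rangle_n\bigr|\;\le\; 3M\,W_n,\qquad W_n:=\sup_{c\in\R}\left|\frac{1}{n}\sum_{i=1}^n\varepsilon_i\,\un_{\{X_i>c\}}\right|.
\end{equation*}
Applied separately to $\hat u_n$ and $u_n$, this gives $\E\|\hat u_n-u_n\|_n^2\le 6M\,\E W_n$.

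The class $\{(x,\varepsilon)\mapsto \varepsilon\,\un_{\{x>c\}}:c\in\R\}$ is a one-parameter VC class with bounded envelope, and its mean vanishes because $\E[\varepsilon\,\un_{\{X>c\}}]=\E[\un_{\{X>c\}}\,\E(\varepsilon\mid X)]=0$. The uniform law of large numbers thus gives $W_n\to 0$ almost surely, and bounded convergence gives $\E W_n\to 0$, proving the empirical-norm statement. To transfer to the $L_2(\mu)$ norm, I would write $\|\hat u_n-u_n\|^2 = \|\hat u_n-u_n\|_n^2+(\mu-\mu_n)\bigl[(\hat u_n-u_n)^2\bigr]$, where $\mu_n$ is the empirical measure, and observe that $(\hat u_n-u_n)^2$ belongs to the class of bounded functions of uniformly bounded total variation, which is Glivenko-Cantelli; the remainder therefore vanishes almost surely and, being bounded, also in expectation.

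The delicate point is the layer-cake step applied to $\hat u_n$: its level-set thresholds are random and depend on $\varepsilon$, so the inequality cannot be obtained by any direct independence argument. The argument succeeds only because one first takes the supremum over all deterministic thresholds $c\in\R$, turning the data-dependent quantity into the single deterministic empirical process $W_n$ which can then be handled by standard VC theory.
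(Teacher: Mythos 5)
Your proof is correct, and it takes a genuinely different route from the paper's. You share the starting point: the variational characterizations of the two projections onto ${\cal C}^+_n$, added together, give the basic inequality $\|\hat u_n-u_n\|_n^2\le\langle\varepsilon,\hat u_n-u_n\rangle_n$, exactly as in the paper. From there the paper approximates bounded monotone vectors by elements of a finite-dimensional subspace $H_+$ of dimension $N=8C^2/\delta^2$ (Lemma \ref{lem:nick}), bounds the cross term by $\|\pi_{H_+}(\varepsilon)\|_n$ up to $\delta$-terms, computes $\E\left[\|\pi_{H_+}(\varepsilon)\|_n^2\right]=\sigma^2N/n$ by a trace argument, and optimizes $\delta=n^{-\alpha}$; the passage from $\|\cdot\|_n$ to $\|\cdot\|$ is then handled by the covering-number concentration inequality of Lemma \ref{lem:annexe-concentration3} together with dominated convergence. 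You instead reduce the cross term by a layer-cake argument: since upper level sets of non-decreasing functions are half-lines, $|\langle\varepsilon,v\rangle_n|\le 3MW_n$ holds pathwise and uniformly over all non-decreasing $|v|\le M$, where $W_n$ is a single one-parameter empirical process with zero mean (by $\E[\varepsilon|X]=0$) that vanishes almost surely by a VC/Glivenko--Cantelli argument; the norm transfer then uses that squares of uniformly bounded functions of uniformly bounded variation form a Glivenko--Cantelli class. You also correctly identified and resolved the one delicate point, the dependence of $\hat u_n$ on $\varepsilon$, by taking the supremum over deterministic thresholds before invoking any limit theorem — this plays the same role as the deterministic subspace $H_+$ (and the deterministic covering net ${\cal E}^*$) in the paper. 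As for what each approach buys: yours is shorter and leans on standard empirical-process machinery, whereas the paper's is self-contained and quantitative, yielding exponential inequalities and explicit rate-type bounds (and inequality (\ref{ieuzoc}) is reused later, e.g.\ inside the proof of Proposition \ref{lem:variance-iso} itself and implicitly in Theorem \ref{thm:iir-consistency}), which your qualitative Glivenko--Cantelli steps do not supply, although your $W_n$ would admit an $O(n^{-1/2})$ bound in expectation if rates were wanted. One cosmetic caveat: level sets of monotone functions may be closed half-lines, so either define $W_n$ as a supremum over both open and closed half-lines or observe that on a finite sample the two suprema coincide.
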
 
Combining Proposition \ref{lem:biais-iso} and Proposition \ref{lem:variance-iso} yields the following theorem.
\begin{theorem}\label{thm:iso-consistency}
Consider the model $Y=r(X)+\varepsilon$, where $r:[0,1]\rightarrow\R$ belongs to $L_2(\mu)$, $\mu$ is a non-atomic distribution on $[0,1]$,  and $\varepsilon$ is a bounded random variable satisfying $\E\left[\varepsilon|X\right]=0$. Denote $u_+$ and $\hat{u}_n$ the functions resulting from the isotonic regression applied on $r$ and on the sample ${\cal D}_n$, respectively. Then we have
$$\E\left[\|\hat{u}_n-u_+\|_n^2\right]\to 0$$
and
$$\E\left[\|\hat{u}_n-u_+\|^2\right]\to 0$$
when the sample size $n$ tends to infinity.
\end{theorem}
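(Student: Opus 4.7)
The plan is to deduce the theorem directly by combining the two propositions stated just above it, via the triangle inequality and a uniform boundedness argument to pass from almost sure to $L^1$ convergence of the bias term.

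First, I would write, for either norm $\|\cdot\|$ or $\|\cdot\|_n$, the triangle inequality
\[
\|\hat{u}_n-u_+\| \leq \|\hat{u}_n-u_n\| + \|u_n-u_+\|,
\]
and then square it using the elementary bound $(a+b)^2 \leq 2a^2+2b^2$ to obtain
\[
\|\hat{u}_n-u_+\|^2 \leq 2\|\hat{u}_n-u_n\|^2 + 2\|u_n-u_+\|^2,
\]
and analogously with the empirical norm. Taking expectations reduces the problem to controlling the two terms on the right separately.

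The variance term $\E[\|\hat{u}_n-u_n\|^2]$ (resp.\ $\E[\|\hat{u}_n-u_n\|_n^2]$) vanishes as $n\to\infty$ by Proposition \ref{lem:variance-iso}, using the assumption that $\varepsilon$ is bounded. The bias term requires a bit more care: Proposition \ref{lem:biais-iso} yields only almost sure convergence of $\|u_n-u_+\|$ and $\|u_n-u_+\|_n$ to $0$. To upgrade this to convergence in expectation of the squared norms, I would invoke Lebesgue's dominated convergence theorem, as indicated by the authors in the paragraph following Proposition \ref{lem:biais-iso}. The required uniform integrability is immediate because $r$ is of bounded variation, hence bounded, which forces both $u_+$ and $u_n$ to be uniformly bounded in $n$ (by Lemma 2 of Anevski and Soulier, as recalled in the text); thus $\|u_n-u_+\|_n^2$ and $\|u_n-u_+\|^2$ are dominated by a fixed constant.

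Putting the two pieces together gives $\E[\|\hat{u}_n-u_+\|^2]\to 0$ and $\E[\|\hat{u}_n-u_+\|_n^2]\to 0$, which is exactly the statement of Theorem \ref{thm:iso-consistency}. There is no real obstacle: the entire substance of the result lies in the two preceding propositions, and the present theorem is essentially a bookkeeping consequence. The only point worth flagging is the routine verification that the dominated convergence step is legitimate for the empirical norm as well, which rests on the uniform bound on $u_n$ not depending on the realization of the sample.
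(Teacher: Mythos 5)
Your proposal is correct and follows essentially the same route as the paper: the authors obtain the theorem by combining Proposition \ref{lem:biais-iso} (upgraded to convergence in expectation via dominated convergence, using the uniform bound on $u_n$ and $u_+$ from Anevski and Soulier) with Proposition \ref{lem:variance-iso}, exactly as you do with the triangle inequality and the bound $(a+b)^2\leq 2a^2+2b^2$. The only cosmetic remark is that the boundedness of $r$ is taken for granted in the paper's Section \ref{sec:isotonic-consistency} (the theorem statement only says $r\in L_2(\mu)$), so your appeal to bounded variation is an appropriate way to justify the domination step within the paper's overall framework.
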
 

\noindent This result generalizes the consistency of isotonic regression when applied in a more general context than the one of monotone functions. It will be of constant use when iterating our algorithm. This is the topic of the upcoming section.
 
\section{Consistency of iterative isotonic regression}\label{sec:iir-consistency}
We now proceed with our main result, which states that there is a sequence of iterations $k_n$, increasing with the sample size $n$, such that
$$  \E\left[\|\hat{r}_n^{(k_n)}-r\|^2\right]\underset{n\rightarrow \infty}{\longrightarrow}0.$$
In order to control the expectation of the $L_2$ distance between the estimator $\hat{r}_n^{(k)}$ and the regression function $r$, we shall split $\|\hat{r}_n^{(k)}-r\|$ as follows: let $r^{(k)}$ be the result from applying the algorithm on the regression function $r$ itself $k$ times, that is $r^{(k)}=u^{(k)}+b^{(k)}$, where 
$$ u^{(k)}=\argmin_{u\in {\cal C}^+}\|r-b^{(k-1)}-u\|\qquad \textrm{and} \qquad b^{(k)}=\argmin_{b\in{\cal C}^-}\|r-u^{(k)}-b\|.$$ 
We then upper-bound
\begin{equation}\label{eq:estim-approx}
\|\hat{r}_n^{(k)}-r\| \leq \|r^{(k)}-r\|+\|\hat{r}_n^{(k)}-r^{(k)}\|.
\end{equation}
In this decomposition, the first term is an approximation error, while the second one corresponds to an estimation error.\\ 

\noindent Figure \ref{fig:demarche-consistance} displays the function $r^{(k)}$ for two particular values of $k$. One can see that, after $k$ steps of the algorithm, there generally remains an approximation error $\|r^{(k)}-r\|$. Nonetheless, one also observes that this error decreases when iterating the algorithm.   
\begin{figure}[H]
\begin{center}
\input{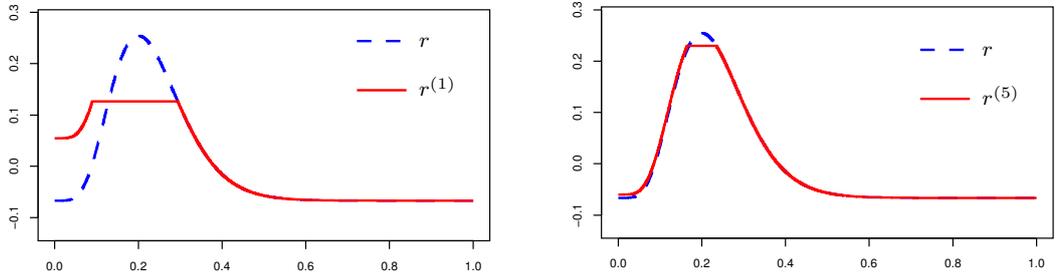}
\end{center}
\caption{Decreasing of the approximation error $\|r^{(k)}-r\|$ with $k$.}
\label{fig:demarche-consistance}
\end{figure}
\noindent The following proposition states that the approximation error can indeed be controlled  by increasing the number of iterations $k$. Its proof relies on the interpretation of \textsf{I.I.R.} as a Von Neumann's algorithm (see Section \ref{sec:approximation} for the proof).
 \begin{proposition}\label{pro:approximation}
Assume that $r$ is a right-continuous function of bounded variation and $\mu$ a non-atomic law on $[0,1]$. Then the approximation term $\|r^{(k)}-r\|$ tends to $0$ when the number of iterations grows:
$$\lim_{k\rightarrow \infty}\|r^{(k)}-r\|=0,$$
where $\|.\|$ denotes the quadratic norm in $L_2(\mu)$. 
\end{proposition}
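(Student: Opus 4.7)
The plan is to interpret the iteration as a backfitting (block coordinate descent) scheme on the convex quadratic
\[F(u,b)=\|r-u-b\|^2\]
over $\mathcal{C}^+\times\mathcal{C}^-$ in $L_2(\mu)\times L_2(\mu)$, and to show that $F(u^{(k)},b^{(k)})=\|\rho^{(k)}\|^2\to 0$, where $\rho^{(k)}=r-u^{(k)}-b^{(k)}$ and $\tilde\rho^{(k)}=r-b^{(k-1)}-u^{(k)}$. Starting from the elementary Hilbert-space inequality $\|P_K(x)-y\|^2+\|x-P_K(x)\|^2\leq\|x-y\|^2$, valid for any closed convex set $K$ and any $y\in K$, applied once with $K=\mathcal{C}^+$, $y=u^{(k-1)}$ and once with $K=\mathcal{C}^-$, $y=b^{(k-1)}$, I obtain the descent estimate
\[\|\rho^{(k)}\|^2+\|u^{(k)}-u^{(k-1)}\|^2+\|b^{(k)}-b^{(k-1)}\|^2\leq\|\rho^{(k-1)}\|^2.\]
Hence $\|\rho^{(k)}\|$ decreases to some $\ell\geq 0$, and the increments $\|u^{(k)}-u^{(k-1)}\|$, $\|b^{(k)}-b^{(k-1)}\|$ are square-summable and tend strongly to $0$ in $L_2(\mu)$.

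Let $r=u_J+b_J$ be the Jordan decomposition, with $u_J\in\mathcal{C}^+$ and $b_J\in\mathcal{C}^-$, and write $P_+,P_-$ for the metric projections onto the two cones. Nonexpansiveness gives
\[\|u^{(k)}-u_J\|=\|P_+(r-b^{(k-1)})-P_+(r-b_J)\|\leq\|b^{(k-1)}-b_J\|,\]
and similarly $\|b^{(k)}-b_J\|\leq\|u^{(k)}-u_J\|$; iterating from $b^{(0)}=0$ bounds $\|u^{(k)}-u_J\|$ and $\|b^{(k)}-b_J\|$ uniformly by $\|b_J\|$, so $(u^{(k)})$, $(b^{(k)})$ and $(\rho^{(k)})$ are bounded in $L_2(\mu)$. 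Extract any weakly convergent subsequence $\rho^{(k_j)}\rightharpoonup\rho^\ast$. The cone-projection optimality conditions read $\tilde\rho^{(k)}\in(\mathcal{C}^+)^\circ$ and $\rho^{(k)}\in(\mathcal{C}^-)^\circ$ (polar cones); these sets are closed convex, hence weakly closed, and since $\tilde\rho^{(k)}-\rho^{(k)}=b^{(k)}-b^{(k-1)}\to 0$ strongly, $\rho^\ast\in(\mathcal{C}^+)^\circ\cap(\mathcal{C}^-)^\circ$. Because $\mathcal{C}^-=-\mathcal{C}^+$, any $f$ in this intersection satisfies $\langle f,u\rangle=0$ for every $u\in\mathcal{C}^+$; choosing $u=\un_{[a,1]}$ yields $\int_{[a,1]}\rho^\ast\,d\mu=0$ for each $a\in[0,1]$, and as linear combinations of such indicators are dense in $L_2(\mu)$, $\rho^\ast=0$ a.e. Thus every weak cluster point is zero, so $\rho^{(k)}\rightharpoonup 0$ weakly in $L_2(\mu)$.

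To upgrade weak to strong convergence, I expand
\[\|\rho^{(k)}\|^2=\langle\rho^{(k)},r\rangle-\langle\rho^{(k)},u^{(k)}\rangle-\langle\rho^{(k)},b^{(k)}\rangle.\]
The third term vanishes by the cone-projection identity $\langle b^{(k)},\rho^{(k)}\rangle=0$ coming from the $b$-update, and the first tends to $0$ by the weak convergence $\rho^{(k)}\rightharpoonup 0$. For the middle term, the analogous identity $\langle\tilde\rho^{(k)},u^{(k)}\rangle=0$ from the $u$-update, combined with $\rho^{(k)}=\tilde\rho^{(k)}+(b^{(k-1)}-b^{(k)})$, yields
\[\bigl|\langle\rho^{(k)},u^{(k)}\rangle\bigr|=\bigl|\langle b^{(k-1)}-b^{(k)},u^{(k)}\rangle\bigr|\leq\|b^{(k-1)}-b^{(k)}\|\cdot\|u^{(k)}\|\to 0\]
by the first two steps. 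Hence $\ell=0$ and the proposition follows.

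The main conceptual obstacle is the middle step: identifying every weak limit of the residuals as an element of $(\mathcal{C}^+)^\circ\cap(\mathcal{C}^-)^\circ$ and recognizing that this intersection is trivial. This requires taking limits in the variational inequalities characterizing each half-step projection and using the density of monotone step functions in $L_2(\mu)$. Once weak convergence $\rho^{(k)}\rightharpoonup 0$ is in hand, the final upgrade to norm convergence is a clean algebraic consequence of the orthogonality relations satisfied at each half-step together with the summability of the increments provided by the initial descent estimate.
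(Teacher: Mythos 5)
Your proposal is correct, but it takes a genuinely different route from the paper. The paper argues geometrically: since $b^{(k)}=P_{{\cal C}^-}(r-u^{(k)})$ is equivalent to $r-b^{(k)}=P_{r+{\cal C}^+}(u^{(k)})$, the sequences $u^{(k)}$ and $r-b^{(k)}$ are exactly von Neumann alternating projections onto the two closed convex sets ${\cal C}^+$ and $r+{\cal C}^+$, and the conclusion $r-r^{(k)}=(r-b^{(k)})-u^{(k)}\to 0$ is then obtained by invoking Theorem 4.8 of Bauschke and Borwein \cite{bauschke1994dykstra}, the bounded-variation hypothesis serving only to ensure that the displacement vector between the two sets vanishes (equivalently $r\in{\cal C}^++{\cal C}^-$). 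You instead give a self-contained Hilbert-space argument directly on the backfitting residuals: a Fej\'er-type descent inequality giving monotonicity of $\|\rho^{(k)}\|$ and square-summable increments; boundedness of the iterates by nonexpansiveness around the fixed pair $(u_J,b_J)$, which is precisely where bounded variation enters (it supplies Jordan components lying in $L_2(\mu)$); identification of every weak cluster point of $\rho^{(k)}$ as an element of $({\cal C}^+)^\circ\cap({\cal C}^-)^\circ=\{0\}$, using weak closedness of polar cones, the strong vanishing of $b^{(k)}-b^{(k-1)}$, and density of the linear span of the indicators $\un_{[a,1]}$; and finally the upgrade from weak to norm convergence via the Moreau orthogonality relations $\langle\tilde\rho^{(k)},u^{(k)}\rangle=0$ and $\langle\rho^{(k)},b^{(k)}\rangle=0$. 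In effect you reprove the special case of the Bauschke--Borwein theorem needed here; your version buys self-containedness and makes explicit where the BV assumption acts, while the paper's version buys brevity and the alternating-projection picture that connects the algorithm to existing convex-analysis results. The only ingredients you use implicitly, as does the paper, are the closedness of ${\cal C}^{\pm}$ in $L_2(\mu)$ (so that the projections, their variational characterizations and the polar-cone identities are legitimate) and the boundedness, hence square-integrability, of the monotone Jordan components; both are unproblematic.
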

\noindent Coming back to  (\ref{eq:estim-approx}), we further decompose the estimation error into a bias and a variance term to obtain 
\begin{center}
\begin{tabular}{ccccc}
$\|\hat{r}_n^{(k)}-r\|$ &$\leq$ & $\underbrace{\|\hat{r}_n^{(k)}-r^{(k)}\|}$&$+$&$\underbrace{\|r^{(k)}-r\|}.$\\
                      &        & Estimation                            &    &Approximation \\
                      &        &  $\leq$ &                                &\\ 
                      &  &$\overbrace{\begin{array}{ccc}                 
                  \|\hat{r}_n^{(k)}-r_n^{(k)}\|&+& \|r_n^{(k)}-r^{(k)}\|\\
                  \downarrow & &  \downarrow\\                   
                  \textrm{Variance}& +& \textrm{Bias}
                  \end{array}}$ & &

\end{tabular}
\end{center}
The function $r_n^{(k)}$ results from $k$ iterations of the algorithm on the sample $(x_i,r(x_i))$, $i=1,\ldots,n$, and can be seen as the equivalent of the function $u_n$ defined in (\ref{eq:u-n}). This decomposition allows us to make use of the consistency results of the previous section, and to control the estimation error when the sample size $n$ goes to infinity. We now state the main theorem of this paper.   
\begin{theorem}\label{thm:iir-consistency}
Consider the model $Y=r(X)+\varepsilon$, where $r:[0,1]\rightarrow\R$ is a right-continuous function of bounded variation, $\mu$ a non-atomic distribution on $[0,1]$, and $\varepsilon$ a bounded random variable satisfying $\E\left[\varepsilon|X\right]=0$. Then there exists an increasing sequence of iterations $(k_n)$ such that 
$$  \E\left[\|\hat{r}_n^{(k_n)}-r\|^2\right]\underset{n\rightarrow \infty}{\longrightarrow}0,$$
where $\|.\|$ denotes the quadratic norm in $L_2(\mu)$.  
\end{theorem}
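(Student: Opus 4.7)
The plan is to combine a step-by-step propagation of Theorem~\ref{thm:iso-consistency} with the approximation bound in Proposition~\ref{pro:approximation}, and then to select $(k_n)$ by a diagonal argument. Starting from the decomposition already given in the text,
\[
\|\hat{r}_n^{(k)}-r\|\leq \|\hat{r}_n^{(k)}-r^{(k)}\|+\|r^{(k)}-r\|,
\]
the approximation term $\|r^{(k)}-r\|$ vanishes as $k\to\infty$ by Proposition~\ref{pro:approximation}. It therefore suffices to prove that for each fixed $k$,
\[
\E\bigl[\|\hat{r}_n^{(k)}-r^{(k)}\|^2\bigr]\underset{n\to\infty}{\longrightarrow}0,
\]
and then to let $k_n$ grow with $n$ slowly enough that this estimation term also vanishes.

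The estimation error at fixed $k$ I would handle by induction on $k$, proving simultaneously, in both the empirical norm and the $L_2(\mu)$ norm, that $\E[\|\hat{u}_n^{(k)}-u^{(k)}\|^2]\to 0$ and $\E[\|\hat{b}_n^{(k)}-b^{(k)}\|^2]\to 0$. The base case $k=1$ is exactly Theorem~\ref{thm:iso-consistency}, applied first to $\iso$ on the original data and then to $\anti$ on the residuals $y-\hat{u}_n^{(1)}$. For the inductive step, introduce the intermediate vector $\tilde{u}_n^{(k+1)}:=\iso(y-b^{(k)})$, where $b^{(k)}$ is evaluated at the sample points $x_i$, and split
\[
\hat{u}_n^{(k+1)}-u^{(k+1)}=\bigl(\hat{u}_n^{(k+1)}-\tilde{u}_n^{(k+1)}\bigr)+\bigl(\tilde{u}_n^{(k+1)}-u^{(k+1)}\bigr).
\]
The first piece is controlled by the $1$-Lipschitz property of the metric projection $\iso$:
\[
\|\hat{u}_n^{(k+1)}-\tilde{u}_n^{(k+1)}\|_n\leq \|b^{(k)}-\hat{b}_n^{(k)}\|_n,
\]
which tends to $0$ in $L_2$-expectation by the inductive hypothesis. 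For the second piece, $\tilde{u}_n^{(k+1)}$ is precisely the isotonic regression estimator attached to the modified model $Y-b^{(k)}(X)=(r(X)-b^{(k)}(X))+\varepsilon$, whose regression function $r-b^{(k)}$ is bounded (so in $L_2(\mu)$) and whose noise is still bounded; Theorem~\ref{thm:iso-consistency} therefore applies directly and gives the consistency of $\tilde{u}_n^{(k+1)}$ toward its $L_2(\mu)$ projection $u^{(k+1)}$. A triangle inequality closes the induction for $\hat{u}_n^{(k+1)}$; the symmetric argument with $\anti$, using $\tilde{b}_n^{(k+1)}:=\anti(y-u^{(k+1)})$, does the same for $\hat{b}_n^{(k+1)}$. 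Adding the two yields $\E[\|\hat{r}_n^{(k)}-r^{(k)}\|^2]\to 0$ for every fixed $k$.

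The choice of $(k_n)$ is then a standard diagonal construction: setting $a_n(k)=\E[\|\hat{r}_n^{(k)}-r^{(k)}\|^2]$ and $\rho(k)=\|r^{(k)}-r\|^2$, for each $k\in\N$ pick $N(k)\in\N$ such that $a_n(k)\leq 1/k$ for every $n\geq N(k)$, and define
\[
k_n=\max\bigl\{k\in\N\,:\,N(k)\leq n\bigr\},
\]
with the convention $k_n=1$ for small $n$. Then $k_n\to\infty$ as $n\to\infty$, $a_n(k_n)\leq 1/k_n\to 0$, and $\rho(k_n)\to 0$ by Proposition~\ref{pro:approximation}; combining these via $(a+b)^2\leq 2a^2+2b^2$ gives $\E[\|\hat{r}_n^{(k_n)}-r\|^2]\to 0$.

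The main obstacle is the inductive step for the estimation error: the input of $\iso$ at iteration $k+1$ already depends on the noise through $\hat{b}_n^{(k)}$, so Theorem~\ref{thm:iso-consistency} cannot be invoked on it directly. The device of passing through $\tilde{u}_n^{(k+1)}=\iso(y-b^{(k)})$, whose population target $u^{(k+1)}$ is deterministic, combined with the $1$-Lipschitz property of $\iso$ and $\anti$ to absorb the perturbation $\hat{b}_n^{(k)}-b^{(k)}$, is what resolves this difficulty. A secondary point of care is the transfer between the empirical norm $\|\cdot\|_n$ (in which the $1$-Lipschitz bound naturally lives) and the $L_2(\mu)$ norm (in which $\|r^{(k)}-r\|$ is measured), but since both conclusions of Theorem~\ref{thm:iso-consistency} are already stated in both norms and the iterates $u^{(k)},b^{(k)}$ remain uniformly bounded at each $k$ (by the $L^\infty$ bound on projections onto $\mathcal{C}^\pm$ of bounded functions quoted from Anevski--Soulier), this transfer is routine within the induction.
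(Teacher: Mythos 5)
Your proposal is correct and follows essentially the same route as the paper: the same estimation/approximation split, the same propagation of Theorem~\ref{thm:iso-consistency} through the iterations via intermediate estimators built from the deterministic population iterates (e.g.\ $\iso(y-b^{(k)})$) together with the distance-reducing property of the metric projections and the empirical-to-$L_2(\mu)$ norm transfer, and the same control of $\|r^{(k)}-r\|$ by Proposition~\ref{pro:approximation}. The only differences are cosmetic (you diagonalize on $\E[\|\hat{r}_n^{(k)}-r^{(k)}\|^2]\leq 1/k$ and square at the end, while the paper ties the threshold to $\|r^{(k)}-r\|+1/k$; just take your $N(k)$ increasing so that $k_n$ is well defined).
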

\begin{proof}
\noindent Coming back to the original notation, Theorem \ref{thm:iso-consistency} states that
\begin{equation}\label{eq:bvn-finalstep05}
\lim_{n\rightarrow \infty}\E\left[\|\hat{u}_n^{(1)}-u^{(1)}\|^2_n\right]=0\qquad \textrm{and} \qquad\lim_{n\rightarrow \infty}\E\left[\|\hat{u}_n^{(1)}-u^{(1)}\|^2\right]=0.
\end{equation}
\noindent In the following, we show that this result still holds when applying the backfitting algorithm. Before proceeding, just remark that, since $r$ and $\varepsilon$ are bounded, this will also be the case for all the quantities at stake in the remainder of the proof. In particular, this allows us to use the concentration inequalities established in Section \ref{sec:inegalites-concentration}. \\

\noindent $\bullet$ We first describe the end of the first step by showing that  $\E\left[\|\hat{b}_n^{(1)}-b^{(1)}\|^2\right]\rightarrow 0$.\\
Recall the definitions 
$$ b^{(1)}=\argmin_{b\in {\cal C}^-}\|r-u^{(1)}-b\| \qquad \textrm{and} \qquad \hat{b}_n^{(1)}=\argmin_{b\in{\cal C }^-_n}\|y-\hat{u}_n^{(1)}-b\|_n.$$ 
In order to mimic the previous step, let us consider the vectors
$$
\tilde{y}=y-u^{(1)} \qquad \textrm{and} \qquad \tilde{b}_n^{(1)}=\argmin_{b\in {\cal C}^-_n}\|\tilde{y}-b\|_n,
$$
so that
$$ \tilde{y}=\left(r-u^{(1)}\right)+\varepsilon$$
and 
$$\tilde{b}_n^{(1)}=\argmin_{b\in{\cal C}^-_n}\|(r-u^{(1)})+\varepsilon-b\|_n.$$
 To study the term $\|\tilde{b}_n^{(1)}-b^{(1)}\|$, one can apply {\it mutatis mutandis} the result of Theorem \ref{thm:iso-consistency}, replacing $\hat{u}_n^{(1)}$ by $\tilde{b}_n^{(1)}$, $r$ by $r-u^{(1)}$, and isotonic regression by antitonic regression. Hence,
\begin{equation}\label{oebv}
\lim_{n\rightarrow \infty}\E\left[\|\tilde{b}_n^{(1)}-b^{(1)}\|^2_n\right]=0\qquad \textrm{and} \qquad\lim_{n\rightarrow \infty}\E\left[\|\tilde{b}_n^{(1)}-b^{(1)}\|^2\right]=0.
\end{equation}
As projection reduces distances, we also have
$$\|\hat{b}_n^{(1)}-\tilde{b}_n^{(1)}\|_n\leq \|y-\hat{u}_n^{(1)}-\tilde{y}\|_n=\|\hat{u}_n^{(1)}-u^{(1)}\|_n.$$
Thanks to equations (\ref{eq:bvn-finalstep05}) and (\ref{oebv}), we deduce  
$$\E\left[\|\hat{b}_n^{(1)}-b^{(1)}\|^2_n\right]\leq 2\times
\left\{\E\left[ 
\|\hat{b}_n^{(1)}-\tilde{b}_n^{(1)}\|^2_n\right]+\E\left[\|\tilde{b}_n^{(1)}-b^{(1)}\|^2_n
\right]\right\}\rightarrow 0.$$
Invoking the same arguments as those at the end of the proof of Proposition \ref{lem:variance-iso}, we also have
$$\lim_{n\rightarrow\infty}\E\left[\|\hat{b}_n^{(1)}-b^{(1)}\|^2\right]=0.$$
Finally, at the end of the first iteration, we have  
$$ \E\left[\|\hat{r}_n^{(1)}-r^{(1)}\|^2\right]\leq 2\times\left\{\E\left[\|\hat{u}_n^{(1)}-u^{(1)}\|^2\right]+\E\left[\|\hat{b}_n^{(1)}-b^{(1)}\|^2\right]\right\}\rightarrow 0.$$ 
$\bullet$ For the beginning of the second iteration, consider this time 
$$\hat{u}_n^{(2)}=\argmin_{u\in{\cal C}^+_n}\|y-\hat{b}_n^{(1)}-u\|_n \qquad \textrm{and} \qquad u^{(2)}=\argmin_{u\in{\cal C}^+}\|r-b^{(1)}-u\|.$$
Let us introduce 
 $$ \tilde{y}=y-b^{(1)}=(r-b^{(1)})+\varepsilon \qquad \textrm{and} \qquad \tilde{u}_n^{(2)}=\argmin_{u\in{\cal C}^+_n}\|\tilde{y}-u\|_n=\argmin_{u\in{\cal C}^+_n}\|(r-b^{(1)})+\varepsilon-u\|_n.$$
We apply Theorem \ref{thm:iso-consistency} again, replacing  $r$ by $r-b^{(1)}$, and $\hat{u}_n^{(1)}$ by $\tilde{u}_n^{(2)}$. This leads to  
$$\lim_{n\rightarrow 0}\E\left[\|\tilde{u}_n^{(2)}-u^{(2)}\|^2_n\right]=0.$$
Thanks to the reduction property of isotonic regression and using the conclusion of the first iteration, we get
$$ \E\left[\|\hat{u}_n^{(2)}-\tilde{u}_n^{(2)}\|^2_n\right]\leq \E\left[\|y-\hat{b}_n^{(1)}-((r-b^{(1)})+\varepsilon)\|^2_n\right]= \E\left[\|\hat{b}_n^{(1)}-b^{(1)}\|^2_n\right]\rightarrow 0.$$
Therefore  
$$\E\left[\|\hat{u}_n^{(2)}-u^{(2)}\|^2_n\right]\leq 2\times \left\{\E\left[ \|\hat{u}_n^{(2)}-\tilde{u}_n^{(2)}\|^2_n\right]+\E\left[\|\tilde{u}_n^{(2)}-u^{(2)}\|^2_n\right]\right\}\rightarrow 0$$
and, as before, we also have 
$$\lim_{n\rightarrow\infty}\E\left[\|\hat{u}_n^{(2)}-u^{(2)}\|^2\right]=0.$$
The same scheme leads to $\lim_{n\rightarrow\infty}\E\left[\|\hat{b}_n^{(2)}-b^{(2)}\|^2\right]=0$, so that 
$$ \E\left[\|\hat{r}_n^{(2)}-r^{(2)}\|^2\right]\leq 2\times\left\{\E\left[\|\hat{u}_n^{(2)}-u^{(2)}\|^2\right]+\E\left[\|\hat{b}_n^{(2)}-b^{(2)}\|^2\right]\right\}\rightarrow 0.$$
 $\bullet$ By iterating this process, it is readily seen that, for all $k\geq 1$,
 $$\lim_{n\rightarrow \infty}\E\left[\|\hat{r}_n^{(k)}-r^{(k)}\|^2\right]=0,$$
which means that, at each iteration, the estimation error goes to 0 when the sample size tends to infinity.\\ 

\noindent We deduce that we can construct an increasing sequence $(n_k)$ such that for each $k\geq 1$ and for all $n\geq n_k$ 
$$ \E\left[\|\hat{r}_n^{(k)}-r^{(k)}\|\right]\leq \|r^{(k)}-r\|+\frac{1}{k}.$$
Notice that the term $\|r^{(k)}-r\|$ might be equal to zero ({\it e.g.}, $r^{(1)}=r$ if $r$ is monotone), hence the additive term $1/k$ in the previous inequality. Consequently,
$$ \E\left[\|\hat{r}_n^{(k)}-r\|\right]\leq 2\|r^{(k)}-r\|+\frac{1}{k}.$$
Then let us consider the sequence $(k_n)$ defined as:  $k_n=0$ if $n< n_1$,  $k_n=1$ if $n_1\leq n< n_2$, and so on.
Obviously $(k_n)$ tends to infinity and
 $$\E\left[\|\hat{r}_n^{(k_n)}-r\|\right]\leq2\|r^{(k_n)}-r\|+\frac{1}{k_n}\xrightarrow[n\to\infty]{}0.$$
This ends the proof of Theorem \ref{thm:iir-consistency}.
\end{proof}

\section{Proofs}\label{sec:appendix}
\subsection{Proof of Proposition \ref{lem:biais-iso}}\label{sec:biais-iso}
For $g$ and $h$ two functions defined on $[0,1]$, we denote $\Delta_n(g-h)$ the random variable
$$
\Delta_n(g-h)=\|g-h\|_n^2-\|g-h\|^2=\frac{1}{n}\sum_{i=1}^n\left\{(g(X_i)-h(X_i))^2-\E\left[(g(X)-h(X))^2\right]\right\}.
$$
We first show that
\begin{equation}\label{eq:biais1}
\|r-u_n\|_n\rightarrow \|r-u_{+}\| \qquad a.s.
\end{equation}
To this end, we proceed in two steps, proving in a first time that 
\begin{equation}\label{eq:limsup1} 
 \limsup \|r-u_n\|_n\leq \|r-u_{+}\| \qquad a.s.
\end{equation}
and  in a second time that
\begin{equation}\label{eq:limsup2}
\liminf \|r-u_n\|_n\geq \|r-u_{+}\| \qquad a.s.
\end{equation} 
For the first inequality, let us denote
$$A_n=\left\{|\Delta_n(r-u_{+})|>n^{-1/3}\right\}=\left\{|\|r-u_{+}\|_n^2-\|r-u_{+}\|^2|>n^{-1/3}\right\}.$$
By the definition of $u_{n}$, note that for all $n$,
$$\|r-u_n\|_n\leq \|r-u_{+}\|_n$$
so that on $\overline{A_n}$,
$$\|r-u_n\|_n^2\leq \|r-u_{+}\|_n^2\leq \|r-u_{+}\|^2+n^{-1/3}.$$
Consequently
$$ B_n=\left\{\|r-u_n\|^2_n\leq \|r-u_{+}\|^2+n^{-1/3}\right\}\supset \overline{A_n}.$$
Therefore
$$\P\left(\liminf B_n\right)\geq \P\left(\liminf\overline{A_n}\right)=1-\P\left(\limsup A_n\right).$$
Invoking Lemma \ref{lem:annexe-concentration1} and Borel-Cantelli Lemma, we conclude that $\P\left(\limsup A_n\right)=0$, and hence $\P\left(\liminf B_n \right)=1.$ On the set $\liminf B_n$, we have 
$$ \limsup \|r-u_n\|_n^2\leq \|r-u_{+}\|^2,$$
which proves Equation (\ref{eq:limsup1}).\\

\noindent Conversely, we now establish Equation (\ref{eq:limsup2}). By definition of $u_{+}$, observe that for all $n$,
$$ \qquad \|r-u_{+}\|\leq \|r-u_n\|.$$
Consider the sets
$$C_n=\left\{ \sup_{h\in{\cal C}_{[a,b]}^+}\vert\Delta_n(r-h)\vert>n^{-1/3}\right\}\ \textrm{and}\ D_n=\left\{\|r-u_n\|_n^2\geq \|r-u_{+}\|^2-n^{-1/3}\right\}$$
so that $\overline{C_n} \subset D_n$, and by applying   Lemma \ref{lem:annexe-concentration2},
$$\P\left(\liminf D_n\right) \geq 1-\P\left(\limsup C_n\right)=1.$$ 
On the set $\liminf D_n$, one has 
$$\liminf\|r-u_n\|_n^2\geq\|r-u_+\|^2,$$
which proves (\ref{eq:limsup2}). Combining Equations (\ref{eq:limsup1}) and (\ref{eq:limsup2}) leads to (\ref{eq:biais1}).\\ 
 
\noindent Next, using Lemma \ref{lem:annexe-concentration2} again, we get
\begin{equation*}\label{eq:res1-1}
\lim_{n\rightarrow \infty}\|r-u_n\|_n-\|r-u_n\|=0 \qquad a.s.
\end{equation*} 
Combined with (\ref{eq:biais1}), this leads to
\begin{equation}\label{eq:res1-2}
\|r-u_n\|\rightarrow \|r-u_{+}\| \qquad a.s.
\end{equation}
It remains to prove the almost sure convergence of $u_n$ to $u_+$. For this, it suffices to use the parallelogram law. Indeed, noting $m_n=(u_n+u_{+})/2$, we have  
$$\|u_n-u_{+}\|^2=2\left(\|r-u_{+}\|^2+\|u_n-r\|^2\right)-4\|m_n-r\|^2.$$
Since both $u_{+}$ and $u_n$ belong to the convex set ${\cal C}^+$, so does $m_n$. Hence $\|r-u_{+}\|^2\leq \|r-m_n\|^2$, and 
$$ \|u_n-u_{+}\|^2\leq 2\left(\|u_n-r\|^2-\|r-u_{+}\|^2\right).$$
Combining this with (\ref{eq:res1-2}), we conclude that
\begin{equation*}\label{eq:final-biais}
\lim_{n\rightarrow \infty}\|u_n-u_{+}\|=0 \qquad a.s.
\end{equation*}
Finally, Lemma \ref{lem:annexe-concentration2} guarantees the same result for the empirical norm, that is 
\begin{equation*}\label{eq:final-biaisbis}
\lim_{n\rightarrow \infty}\|u_n-u_{+}\|_n=0 \qquad a.s.
\end{equation*}
and the proof is complete.

\subsection{Proof of Proposition \ref{lem:variance-iso}}\label{sec:variance-iso}

Let us denote $\langle \cdot , \cdot \rangle_n$ the inner product associated to the empirical norm $\|.\|_n$. Since isotonic regression corresponds to the metric projection onto the closed convex cone ${\cal C}^+_n$ with respect to this empirical norm, the vectors $\hat{u}_n$ et $u_n$ are characterized by the following inequalities: for any vector $u \in {\cal C}^+_n$,
\begin{eqnarray}
&&\langle y-\hat{u}_n ,  u - \hat{u}_n\rangle_n \, \leq 0 \label{eq:1}\\
&& \langle r - u_n , u - u_n\rangle_n \, \leq 0 \label{eq:2}
\end{eqnarray} 
Setting $u=u_n$ in (\ref{eq:1}) and $u=\hat{u}_n$ in (\ref{eq:2}), we get
\[
\langle y -\hat{u}_n,u_n -\hat{u}_n\rangle_n \; \leq 0  \qquad \textrm{ and } \qquad  \langle r-u_n , \hat{u}_n - u_n\rangle_n \; \leq 0.
\]
Since $\varepsilon=y-r$, this leads to
\begin{equation} \label{eq:3}
\| \hat{u}_n - u_n \|_n^2 \,\, \leq \,\, \langle \varepsilon ,  \hat{u}_n - u_n \rangle_n.
\end{equation}

\noindent Next, we have to use an approximation result, namely Lemma \ref{lem:nick} in Section \ref{lknxn}. The underlying idea is to exploit the fact that any non-decreasing bounded sequence can be approached by the element of a subspace  $H_+$ at distance less than $\delta$. Specifically, if $C$ is an upper-bound for the absolute value of the considered non-decreasing bounded sequences, we can construct such a subspace $H_+$ with dimension $N$ where $N=(8C^2)/\delta^2$. From now on, we will take $N\leq n$. Before proceeding, just notice that the boundedness assumption on the random variables $\varepsilon_i$ allows us to find a common upper bound $C$ for the absolute values of the components of $\hat{u}_n$ and $u_n$.\\

\noindent Let us introduce the vectors $\hat{h}_n$ and $h_n$ defined by
$$ \hat{h}_n=\inf_{h\in H_+}\|\hat{u}_n-h\|_n   \qquad \textrm{ and } \qquad   h_n=\inf_{h\in H_+}\|u_n-h\|_n$$
so that
$$ \|\hat{u}_n-\hat{h}_n\|_n\leq \delta \qquad \textrm{ and } \qquad \|u_n-h_n\|_n\leq \delta.$$
From this, we get
\begin{align*}
\langle \varepsilon,\hat{u}_n-u_n\rangle_n=&\langle \varepsilon,\hat{u}_n-\hat{h}_n\rangle_n+\langle \varepsilon,\hat{h}_n-h_n\rangle_n+\langle \varepsilon,h_n-u_n\rangle_n\\
\leq& \|\hat{h}_n-h_n\|_n\left < \varepsilon,\frac{\hat{h}_n-h_n}{\|\hat{h}_n-h_n\|_n}\right >_n+2\delta\|\varepsilon\|_n\\
\leq& \left\{\|\hat{h}_n-\hat{u}_n\|_n+\|\hat{u}_n-u_n\|_n+\|u_n-h_n\|_n\right\}\sup_{v\in H_+,\|v\|_n=1}\langle \varepsilon,v\rangle_n+2\delta\|\varepsilon\|_n\\
\leq& \left\{\|\hat{u}_n-u_n\|_n+2\delta\right\}\sup_{v\in H_+,\|v\|_n=1}\langle \varepsilon,v\rangle_n+2\delta\|\varepsilon\|_n.
\end{align*}  
According to (\ref{eq:3}), we deduce
$$
\| \hat{u}_n - u_n \|_n^2\leq \left\{\|\hat{u}_n-u_n\|_n+2\delta\right\}\sup_{v\in H_+,\|v\|_n=1}\langle \varepsilon,v\rangle_n+2\delta\|\varepsilon\|_n$$
so that
\begin{equation*}
\|\hat{u}_n-u_n\|_n^2\leq \left\{\|\hat{u}_n-u_n\|_n+2\delta\right\}\|\pi_{H_+}(\varepsilon)\|_n+2\delta\|\varepsilon\|_n,
\end{equation*}
where $\pi_{H_+}(\varepsilon)$ stands for the metric projection of $\varepsilon$ onto $H_+$. Put differently, we have
$$\|\hat{u}_n-u_n\|_n^2\leq \|\hat{u}_n-u_n\|_n\times\|\pi_{H_+}(\varepsilon)\|_n+2\delta\left\{\|\pi_{H_+}(\varepsilon)\|_n+\|\varepsilon\|_n\right\},$$ 
and taking the expectation on both sides leads to
$$ \E\left[\|\hat{u}_n-u_n\|_n^2\right]\leq \E\left[\|\hat{u}_n-u_n\|_n\times\|\pi_{H_+}(\varepsilon)\|_n\right]+2\delta\left\{\E\left[\|\pi_{H_+}(\varepsilon)\|_n\right]+\E\left[\|\varepsilon\|_n\right]\right\}.$$
If we denote
$$\left\{ \begin{array}{rl}
x^2&=\E\left[\|\hat{u}_n-u_n\|^2_n\right]\\
\alpha_n&=\sqrt{\E\left[\|\pi_{H_+}(\varepsilon)\|_n^2\right]}\\
\beta_n&=2\delta\left\{\E\left[\|\pi_{H_+}(\varepsilon)\|_n\right]+\E\left[\|\varepsilon\|_n\right]\right\}
\end {array}\right .$$
an application of Cauchy-Schwarz inequality gives
$$x^2-\alpha_nx-\beta_n\leq 0\ \Rightarrow\ x\leq \frac{\alpha_n+\sqrt{\alpha_n^2+4\beta_n}}{2},$$
which means that  
$$ \E\left[\|\hat{u}_n-u_n\|^2_n\right]\leq \left(\frac{\alpha_n+\sqrt{\alpha_n^2+4\beta_n}}{2}\right)^2.$$
Since the random variables $\varepsilon_i$ are i.i.d. with mean zero and common variance $\sigma^2$, a straightforward computation shows that
$$\E\left[\|\pi_{H_+}(\varepsilon)\|_n^2\right]=\frac{1}{n}\E\left[(\pi_{H_+}\varepsilon)'(\pi_{H_+}\varepsilon)\right]=\frac{1}{n}\E\left[\trace\left((\pi_{H_+}\varepsilon)'(\pi_{H_+}\varepsilon)\right)\right]=\frac{1}{n}\trace\left(\E\left[\varepsilon\varepsilon'\right]\pi_{H_+}\right),$$
and since $H_+$ has dimension $N=(8C^2)/\delta^2$, this gives
$$\E\left[\|\pi_{H_+}(\varepsilon)\|_n^2\right]=\sigma^2\frac{N}{n}\ \Rightarrow\  \alpha_n=\sigma \sqrt{\frac{N}{n}}=\frac{2\sqrt{2}C\sigma}{\delta\sqrt{n}}.$$
Set $\delta=\delta_n=n^{-\alpha}$ with $0<\alpha<1/2$, it then follows that $\alpha_n$ goes to zero when $n$ goes to infinity. Moreover, Jensen's inequality implies 
$$ \beta_n\leq 2\delta(\alpha_n+\sigma).$$
As both $\delta =\delta_n$ and $\alpha_n$ tend to zero when $n$ goes to infinity, we have shown that 
\begin{equation}\label{eq:part1}
\lim_{n\rightarrow \infty}\E\left[\|\hat{u}_n-u_n\|_n^2\right]=0.
\end{equation}
Remark that for any non negative random variable $X$,
$$\E[X]=\int_{0}^{+\infty}\P\left(X\geq t\right)dt\leq n^{-1/4}+\int_{0}^{+\infty}\P\left(X\geq t\right)\un_{\{t\geq n^{-1/4}\}}\ dt.$$
From equation (\ref{ieuzoc}) in the proof of Lemma \ref{lem:annexe-concentration3}, we know that for any $t>0$,
$$\P\left(\left|\|\hat{u}_n-u_n\|_n^2-\|\hat{u}_n-u_n\|^2\right|\geq t\right)\leq\exp\left(2\left\lceil\frac{64C^2}{t}\right\rceil\log n-\frac{t^2n}{32C^2}\right).$$
Thus, setting
$$f_n(t)=\un_{[0,n^{-1/4}]}(t)+\exp\left(2\left\lceil\frac{64C^2}{t}\right\rceil\log n-\frac{t^2n}{32C^2}\right)\un_{\{t\geq n^{-1/4}\}},$$
we deduce that
$$\E\left[\left|\|\hat{u}_n-u_n\|_n^2-\|\hat{u}_n-u_n\|^2\right|\right]\leq n^{-1/4}+\int_{0}^{+\infty}f_n(t)\ dt.$$
Then, it is readily seen that there exists an integer $n_0$ such that for all $n\geq n_0$ and for all $t\geq 0$, one has $f_n(t)\leq f_2(t)$. Since for all $t>0$ fixed, $f_n(t)$ goes to 0 when $n$ tends to infinity, it remains to invoke Lebesgue's dominated convergence Theorem to conclude
$$\E\left[\|\hat{u}_n-u_n\|_n^2\right]-\E\left[\|\hat{u}_n-u_n\|^2\right]\rightarrow 0.$$ 
Combining the latter with equation (\ref{eq:part1}), we have obtained
\begin{equation*}
\lim_{n\rightarrow \infty}\E\left[\|\hat{u}_n-u_n\|\right]^2=0,
\end{equation*}
which ends the proof of Proposition \ref{lem:variance-iso}.

\subsection{Proof of Proposition \ref{pro:approximation}}\label{sec:approximation}
Consider the translated cone
$$r+{\cal C}^+=\{r+u,u\in{\cal C}^+\}.$$
Figure \ref{fig:von-neumann1} provides a very simple interpretation of the algorithm:  namely, it illustrates that the sequences of functions $u^{(k)}$ and $r-b^{(k)}$ might be seen as alternate projections onto the cones ${\cal C}^+$ and $r+{\cal C}^+$. In what follows, we justify this illuminating geometric interpretation in a rigorous way, and we explain its key role in the proof of the convergence as $k$ goes to infinity.\\

\noindent By definition, we have $u^{(1)}=P_{{\cal C}^+}(r)$ where $P_{{\cal C}^+}$ denotes the metric projection onto ${\cal C}^+$. Classical properties of projections ensure that 
$$P_{r+{\cal C}^+}(u^{(1)})=r+P_{{\cal C}^+}(u^{(1)}-r)=r-P_{{\cal C}^-}(r-u^{(1)}).$$
Coming back to the  definition of  $b^{(1)}=P_{{\cal C}^-}(r-u^{(1)})$, we are led to
$$r-b^{(1)}=P_{r+{\cal C}^+}(u^{(1)}).$$
In the same manner, since $u^{(2)}=P_{{\cal C}^+}(r-b^{(1)})$, we get 
$$r-b^{(2)}=r-P_{{\cal C}^-}(r-u^{(2)})=r+P_{{\cal C}^+}(r-u^{(2)})=P_{r+{\cal C}^+}(u^{(2)}).$$
More generally, denoting $b^{(0)}=0$, this yields for all $k\geq 1$ (see also figure \ref{fig:von-neumann1})
\begin{equation*}
u^{(k)}=P_{{\cal C}^+}(r-b^{(k-1)}) \qquad \textrm{and} \qquad r-b^{(k)}=P_{r+{\cal C}^+}(u^{(k)}).
\end{equation*}
\begin{figure}[H]
\begin{center}
\input{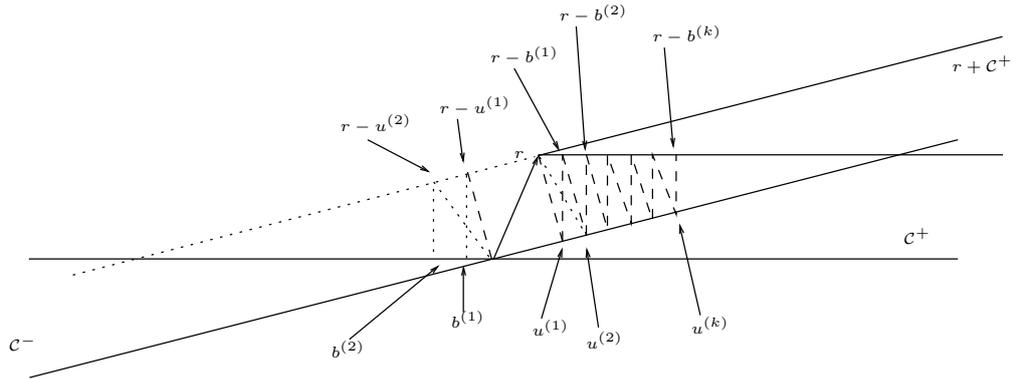}
\end{center}
\caption{Interpretation of \textsf{I.I.R.} as a Von Neumann's algorithm.}
\label{fig:von-neumann1}
\end{figure}
\noindent It remains to invoke Theorem 4.8 in Bauschke and  Borwein \cite{bauschke1994dykstra} to conclude that  $$(r-b^{(k)})-u^{(k)}=r-r^{(k)}\xrightarrow[k\to\infty]{} 0,$$
which ends the proof of Proposition \ref{pro:approximation}.

\section{Technical results }\label{tech}
\subsection{Concentration inequalities}\label{sec:inegalites-concentration}
Throughout the previous proofs, we repeatedly needed to pass from the empirical norm $\|.\|_n$ to the $L_2(\mu)$ norm $\|.\|$. This was made possible thanks to several exponential inequalities that we justify in this section.\\

\noindent Specifically, let $g$ and $h$ denote two mappings from $I=[0,1]$ to $[-C,C]$, and consider the random variable
\begin{equation*}\label{eq:deltadev-annexe}
\Delta_n(g-h)=\frac{1}{n}\sum_{i=1}^n\left\{(g(X_i)-h(X_i))^2-\E\left[(g(X)-h(X))^2\right]\right\}=\|g-h\|_n^2-\|g-h\|^2. 
\end{equation*}
In what follows, we focus on the concentration of $\Delta_n(g-h)$ around zero. The first result is a straightforward application of Hoeffding's inequality. 

\begin{lem}\label{lem:annexe-concentration1}
For any couple of mappings $g$ and $h$ from $[0,1]$ to $[-C,C]$, there exist positive real numbers $\alpha$, $\beta$, $c_1$ and $c_2$, depending only on $C$, and such that
\begin{equation*}\label{eq:lem1-1}
 \P\left(\vert\Delta_n(g-h)\vert>n^{-\alpha}\right)\leq c_1\exp\left(-c_2n^{\beta}\right). 
\end{equation*}
\end{lem}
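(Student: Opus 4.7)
The plan is to reduce the statement to a one-line application of Hoeffding's inequality to the i.i.d.\ sequence of bounded random variables $Z_i := (g(X_i)-h(X_i))^2$.

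First I would observe that, since $g$ and $h$ both take values in $[-C,C]$, we have $|g(x)-h(x)|\leq 2C$ and therefore $Z_i\in[0,4C^2]$. Consequently each centered summand $Z_i-\E[Z_i]$ has range bounded by $4C^2$, and the sample $X_1,\dots,X_n$ being i.i.d., the variables $Z_i-\E[Z_i]$ are i.i.d., centered and bounded. This is exactly the setting in which Hoeffding's inequality applies.

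Next I would write
\[
\Delta_n(g-h)=\frac{1}{n}\sum_{i=1}^n\bigl(Z_i-\E[Z_i]\bigr)
\]
and invoke the two-sided Hoeffding bound to obtain, for every $t>0$,
\[
\P\bigl(|\Delta_n(g-h)|>t\bigr)\leq 2\exp\!\left(-\frac{n\, t^2}{8\,C^4}\right).
\]
Finally, I would specialize to $t=n^{-\alpha}$ with any fixed $\alpha\in(0,1/2)$. This yields
\[
\P\bigl(|\Delta_n(g-h)|>n^{-\alpha}\bigr)\leq 2\exp\!\left(-\frac{n^{\,1-2\alpha}}{8\,C^4}\right),
\]
so the conclusion holds with $c_1=2$, $c_2=1/(8C^4)$ and $\beta=1-2\alpha>0$, all of which depend only on $C$ (and on the arbitrary choice of $\alpha$).

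There is essentially no obstacle here: the key point is simply that the squared difference is a bounded function of a single $X_i$, so the problem falls squarely within the scope of Hoeffding's inequality. The only item worth flagging is that the constants $\alpha$ and $\beta$ must satisfy $\alpha\in(0,1/2)$ in order for the exponent $1-2\alpha$ to be positive; this mild constraint is compatible with the use of the lemma later in the paper (for instance with the choice $\alpha=1/3$ made in the proof of Proposition~\ref{lem:biais-iso}).
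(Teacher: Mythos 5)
Your proof is correct and follows essentially the same route as the paper: apply Hoeffding's inequality to the i.i.d.\ bounded summands $(g(X_i)-h(X_i))^2$ and then set $t=n^{-\alpha}$ with $\alpha\in(0,1/2)$, $\beta=1-2\alpha$. Your constant $1/(8C^4)$ is in fact the careful one for summands ranging over $[0,4C^2]$ (the paper writes $1/(8C^2)$, an immaterial slip since the constants only need to depend on $C$).
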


\noindent \begin{proof}
Since $|g(X_i)-h(X_i)|\leq 2C$, Hoeffding's inequality gives for all $t>0$
\begin{equation}\label{eq:lem1-0}  
\P\left(|\Delta_n(g-h)|>t\right)\leq 2\exp\left(-\frac{t^2n}{8C^2}\right)
\end{equation}
Taking $t=n^{-\alpha}$ with $\alpha\in(0,1/2)$, we deduce
\begin{equation*}\label{eq:lem1-3}
\P\left(|\Delta_n(h)|>n^{-\alpha}\right)\leq 2\exp\left(-\frac{n^{1-2\alpha}}{8C^2}\right)
\end{equation*}
and the result is proved with $c_1=2$, $c_2=1/(8C^2)$ and $\beta=1-2\alpha>0$.
\end{proof}

\noindent The next lemma goes one step further, by considering, for fixed $g$, the tail distribution of 
$$\sup_{h\in {\cal C}^+_{[0,1]}}|\Delta_n(g-h)|.$$
For obvious reasons, this type of result is sometimes called a maximal inequality. The proof shares elements with the one of Theorem 3.1 of van de Geer and Wegkamp \cite{vdgw}.  

 \begin{lem}\label{lem:annexe-concentration2}
Let $g$ be a function from $[0,1]$ to $[-C,C]$ and let ${\cal C}^+_{[0,1]}$ denote the set of non-decreasing functions from $[0,1]$ to $[-C,C]$. There exist positive real numbers $\alpha'$, $\beta'$, $c_1'$ and $c_2'$ depending only on $C$ and such that 
\begin{equation*}\label{eq:lem2-1}
\P\left(\sup_{h\in {\cal C}^+_{[0,1]}}|\Delta_n(g-h)|>n^{-\alpha'}\right)\leq c'_1\exp\left(-c'_2n^{\beta'}\right).
\end{equation*}
\end{lem}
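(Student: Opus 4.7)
The plan is to apply a bracketing argument to the class ${\cal C}^+_{[0,1]}$ of non-decreasing functions from $[0,1]$ to $[-C,C]$, which has small bracketing entropy thanks to the monotonicity constraint.

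For any $\delta>0$, I would first build a finite family of brackets $\{[l_i,u_i]\}_{i=1}^{N(\delta)}$ with $l_i\le u_i$ pointwise in $[-C,C]$, $\|u_i-l_i\|_{L^1(\mu)}\le\delta$, and every $h\in{\cal C}^+_{[0,1]}$ contained in at least one $[l_i,u_i]$. The standard construction discretizes the value range $[-C,C]$ on a $\delta$-grid and, via the quantile transform of the non-atomic measure $\mu$, discretizes the positions at which $h$ crosses each level; monotonicity then parametrizes the family by non-decreasing integer sequences, yielding $\log N(\delta)=O\bigl(\delta^{-1}\log(1/\delta)\bigr)$. For any $h$ lying in the $i$-th bracket, convexity of $s\mapsto(g(x)-s)^2$ together with the uniform bound $|g-h|\vee|g-l_i|\le 2C$ gives the pointwise inequality $|(g-h)^2-(g-l_i)^2|\le 4C(u_i-l_i)$, whence
$$|\Delta_n(g-h)-\Delta_n(g-l_i)|\le 4C\Big(\tfrac{1}{n}\sum_{j=1}^n(u_i-l_i)(X_j)+\|u_i-l_i\|_{L^1(\mu)}\Big).$$

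Two applications of concentration then close the argument. Lemma~\ref{lem:annexe-concentration1} applied to each $g-l_i$ gives $\P\bigl(|\Delta_n(g-l_i)|>t\bigr)\le 2\exp(-nt^2/(8C^2))$, while Hoeffding applied to the bounded centered variable $(u_i-l_i)(X_j)-\|u_i-l_i\|_{L^1(\mu)}$ gives $\P\bigl(\tfrac{1}{n}\sum_j(u_i-l_i)(X_j)>2\delta\bigr)\le\exp(-cn\delta^2)$. A union bound over the $N(\delta)$ brackets combined with the deterministic step above produces
$$\P\Big(\sup_{h\in{\cal C}^+_{[0,1]}}|\Delta_n(g-h)|>t+12C\delta\Big)\le N(\delta)\bigl(2\exp(-nt^2/(8C^2))+\exp(-cn\delta^2)\bigr).$$

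Finally, set $\delta=t=n^{-\gamma}$ for a fixed $\gamma\in(0,1/3)$: the bracketing term $\exp(\widetilde O(n^{\gamma}))$ is then dominated by the two exponentials of order $\exp(-cn^{1-2\gamma})$, and the whole right-hand side takes the required form $c_1'\exp(-c_2'n^{\beta'})$ with $\alpha'=\gamma$ and some $\beta'\in(0,1-2\gamma)$. The main obstacle is the bracketing step: monotonicity must be exploited to keep the entropy polynomial in $1/\delta$ (a naive sup-norm cover fails, since non-decreasing step functions can jump at arbitrary locations), and the bracket widths must be transferred from $L^1(\mu)$ to the empirical measure, which is exactly what forces the extra Hoeffding step on $\tfrac{1}{n}\sum_j(u_i-l_i)(X_j)$.
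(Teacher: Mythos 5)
Your argument is correct, and it reaches the stated bound by a route that differs in its key discretization step from the paper's. The paper first shows that $h\mapsto\Delta_n(g-h)$ is $8C$-Lipschitz with respect to the \emph{empirical} norm $\|\cdot\|_n$, and then covers ${\cal C}^+_{[0,1]}$ by a random $\delta$-net of monotone step functions built on the design points, whose cardinality is bounded deterministically by the elementary combinatorial count $\binom{n+N}{N}\le n^N$ with $N=\lceil 2C/\delta\rceil$ (Lemma \ref{combinatoire}); a union bound plus Hoeffding on each net element, with $t=n^{-\alpha'}$, $\alpha'\in(0,1/3)$, finishes the proof. You instead use deterministic $L^1(\mu)$ brackets with $\log N(\delta)=O(\delta^{-1}\log(1/\delta))$, the pointwise transfer $|(g-h)^2-(g-l_i)^2|\le 4C(u_i-l_i)$, and a second Hoeffding step to pass the bracket widths from $\|u_i-l_i\|_{L^1(\mu)}$ to the empirical measure. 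What each buys: covering directly in the empirical norm spares the paper your extra concentration step on $\tfrac1n\sum_j(u_i-l_i)(X_j)$ and keeps the combinatorics elementary, whereas your brackets are fixed (non-random) functions, so each application of the fixed-function Hoeffding bound is completely unambiguous — a point that requires more care in the paper's scheme, where the net elements depend on the sample — at the price of the additional union bound and a slightly larger entropy term, which is still only $\widetilde O(n^{\gamma})$ and thus harmless against $\exp(-cn^{1-2\gamma})$ for $\gamma<1/3$. Two cosmetic remarks: your final threshold is $(1+12C)n^{-\gamma}$ rather than $n^{-\gamma}$, so either take $\alpha'$ slightly smaller than $\gamma$ (enlarging $c_1'$ to cover small $n$) or rescale $t$ and $\delta$ by $1/(1+12C)$; and you should note that the quantile-based bracket construction gives a cardinality bound depending only on $C$ and $\delta$ (not on $\mu$), which is what makes $c_1',c_2'$ depend only on $C$ as required.
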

\begin{proof}
The first step consists in showing that the mapping $h\mapsto \Delta_n(g-h)$ is Lipschitz. For any pair of functions $h$ and $\tilde{h}$,  we have 
\begin{align*}
\Delta_n(g-h)-\Delta_n(g-\tilde{h})=&\frac{1}{n}\sum_{i=1}^n\left\{2g(X_i)-h(X_i)-\tilde{h}(X_i)\right\}\left(\tilde{h}(X_i)-h(X_i)\right)\\
&-\E\left[\left\{2g(X)-h(X)-\tilde{h}(X)\right\}\left(\tilde{h}(X)-h(X)\right)\right].
\end{align*}
Since $h$ and $\tilde{h}$ take values in $[-C,C]$, we get
$$
\vert\Delta_n(g-h)-\Delta_n(g-\tilde{h})\vert\leq 4C\times\left\{\frac{1}{n}\sum_{i=1}^n\vert h(X_i)-\tilde{h}(X_i)\vert+\E\left[\vert h(X)-\tilde{h}(X)\vert\right]\right\} 
$$
and according to Jensen's inequality,
$$ \vert\Delta_n(g-h)-\Delta_n(g-\tilde{h})\vert\leq 4C\times \left\{\|h-\tilde{h}\|_n+\|h-\tilde{h}\|\right\}.$$
Now, since $\|h-\tilde{h}\|=\E\left[\|h-\tilde{h}\|_n\right]$, if the inequality $\|h-\tilde{h}\|_n\leq \delta$ is satisfied, we also have $\|h-\tilde{h}\|\leq \delta$.  Thus, 
$$ \forall \delta >0,\qquad\|h-\tilde{h}\|_n\leq \delta\Rightarrow  \vert\Delta_n(g-h)-\Delta_n(g-\tilde{h})\vert\leq 8C\delta$$
and the mapping $h\mapsto \Delta_n(g-h)$ is Lipschitz for the empirical norm $\|\cdot\|_n$.\\

\noindent Next, let us consider a $\delta$-covering ${\cal E}^*=\{e^*_j,j=1,\cdots, M\}$ of ${\cal C}^+_{[0,1]}$ for the empirical norm $\|.\|_n$. We stress that this set ${\cal E}^*$ is random since it depends on the points $X_i$, but its cardinality $M$ may be chosen deterministic and upper-bounded as follows (see Lemma \ref{combinatoire}): denoting $N=\left\lceil\frac{2C}{\delta}\right\rceil$, where $\left\lceil \right\rceil$ stands for the ceiling function, we have
\begin{equation}\label{zecnv} 
M=\binom{n+N}{N}\leq n^N,
\end{equation}
where the last inequality is satisfied for any integer $n\geq 2$ as soon as $N\geq 3$.\\

\noindent Then, for any $h$ in ${\cal C}^+_{[0,1]}$, there exists $e^*$ in ${\cal E}^*$ such that $\|h-e^*\|_n\leq \delta$. From the previous Lipschitz property, we know that
$$  \vert\Delta_n(g-h)-\Delta_n(g-e^*)\vert \leq 8C\delta.$$
Letting $t>0$ and $\delta=t/(16C)$, our objective is to upper bound
$$\P\left(\sup_{h\in{\cal C}^+_{[0,1]}}\vert\Delta_n(g-h)\vert>t\right).$$
In this aim, for any $h$ in ${\cal C}^+_{[0,1]}$ and any $e^*$ in ${\cal E}^*$, we start with the decomposition 
$$\vert\Delta_n(g-h)\vert \leq \vert\Delta_n(g-h)-\Delta_n(g-e^*)\vert+\vert\Delta_n(g-e^*)\vert.$$ 
For any $h$ such that $\vert \Delta_n(g-h)\vert>t$, since there exists $e^*$ in ${\cal E}^*$ such that
$$\vert\Delta_n(g-h)-\Delta_n(g-e^*)\vert\leq t/2,$$
we necessarily have $\vert \Delta_n(g-e^*)\vert >t/2$, and consequently
$$\P\left(\vert\Delta_n(g-h)\vert>t\right)\leq \P\left(\max_{j=1\cdots M}\vert\Delta_n(g-e^*_j)\vert>t/2\right).$$
In other words,
\begin{align*}
\P\left(\sup_{h\in{\cal C}^+_{[0,1]}}\vert\Delta_n(g-h)\vert>t\right)&\leq \P\left(\max_{j=1\cdots M}\vert\Delta_n(g-e^*_j)\vert>t/2\right)\\
&\leq \P\left(\bigcup_{j=1}^M\vert\Delta_n(g-e^*_j)\vert>t/2\right)\\
&\leq \sum_{j=1}^M\P\left(\vert\Delta_n(g-e^*_j)\vert>t/2\right).
\end{align*}
According to (\ref{eq:lem1-0}) and to the fact that 
$$M\leq n^N=n^{\left\lceil\frac{2C}{\delta}\right\rceil},$$
fixing $\delta=t/(16C)$ leads to 
$$ 
\P\left(\sup_{h\in{\cal C}^+_{[0,1]}}\vert\Delta_n(g-h)\vert>t\right)\leq 2M\exp\left(-\frac{t^2n}{8C^2}\right)\leq2\exp\left(\left\lceil\frac{32C^2}{t}\right\rceil\log n-\frac{t^2n}{32C^2}\right).
$$
Finally, for any $\alpha'\in(0,1/3)$, there exists $c'_2=c'_2(\alpha')$ such that for any integer $n$,
$$\left\lceil\frac{32C^2}{n^{-\alpha'}}\right\rceil\log n-\frac{n^{-2\alpha'}n}{32C^2}\leq -c'_2n^{1-2\alpha'},$$
hence the desired result with $t=n^{-\alpha'}$ and $\beta'=1-2\alpha'$. 
\end{proof}

\noindent The last concentration inequality is a generalization of the previous one: this time, neither $g$ nor $h$ are assumed fixed.

\begin{lem}\label{lem:annexe-concentration3}
Let us denote ${\cal C}^+_{[0,1]}$ the set of non decreasing mappings from $[0,1]$ to $[-C,C]$. There exist positive real numbers $\alpha''$, $\beta''$, $c''_1$ and $c''_2$, depending only on $C$, and such that
\begin{equation*}\label{eq:lem3-1}
\P\left(\sup_{h_1\in {\cal C}^+_{[0,1]},h_2\in {\cal C}^+_{[0,1]}}\vert\Delta_n(h_1-h_2)\vert>n^{-\alpha''}\right)\leq c''_1\exp\left(-c''_2n^{\beta''}\right).
\end{equation*}
\end{lem}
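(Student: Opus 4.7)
The plan is to adapt the proof of Lemma \ref{lem:annexe-concentration2} via a double discretization, since now both $h_1$ and $h_2$ vary over ${\cal C}^+_{[0,1]}$. First I would establish a Lipschitz-type control on the map $(h_1, h_2) \mapsto \Delta_n(h_1 - h_2)$. Writing
$$(h_1 - h_2)^2 - (\tilde h_1 - \tilde h_2)^2 = \bigl((h_1 - h_2) + (\tilde h_1 - \tilde h_2)\bigr)\bigl((h_1 - \tilde h_1) - (h_2 - \tilde h_2)\bigr)$$
and using that all functions take values in $[-C,C]$, one obtains
$$|\Delta_n(h_1 - h_2) - \Delta_n(\tilde h_1 - \tilde h_2)| \leq 4C\sum_{i=1}^{2}\bigl(\|h_i - \tilde h_i\|_n + \|h_i - \tilde h_i\|\bigr).$$

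Second, I would invoke the same random $\delta$-cover ${\cal E}^* = \{e^*_1, \ldots, e^*_M\}$ of ${\cal C}^+_{[0,1]}$ in the empirical norm that was used in Lemma \ref{lem:annexe-concentration2}, with cardinality bound $M \leq n^N$ and $N = \lceil 2C/\delta \rceil$ coming from Lemma \ref{combinatoire}. For any pair $(h_1, h_2)$ there exist indices $(j_1, j_2)$ with $\|h_i - e^*_{j_i}\|_n \leq \delta$ and, by the same argument as in that proof, also $\|h_i - e^*_{j_i}\| \leq \delta$. Setting $\delta = t/(32C)$, the Lipschitz estimate above gives $|\Delta_n(h_1 - h_2) - \Delta_n(e^*_{j_1} - e^*_{j_2})| \leq t/2$. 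Hence on the event where the supremum exceeds $t$, there must exist some pair $(j_1, j_2)$ with $|\Delta_n(e^*_{j_1} - e^*_{j_2})| > t/2$, and a union bound over the $M^2$ such pairs combined with Hoeffding's inequality (\ref{eq:lem1-0}) applied to each fixed pair yields
$$\P\!\left(\sup_{h_1, h_2 \in {\cal C}^+_{[0,1]}}|\Delta_n(h_1 - h_2)| > t\right) \leq 2 M^2 \exp\!\left(-\frac{t^2 n}{32 C^2}\right) \leq 2\exp\!\left(2\left\lceil\frac{64 C^2}{t}\right\rceil \log n - \frac{t^2 n}{32 C^2}\right).$$

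To close, I would set $t = n^{-\alpha''}$ with $\alpha'' \in (0, 1/3)$, so that the $O(n^{\alpha''}\log n)$ contribution from $\log(M^2)$ is eventually dominated by the quadratic term $n^{1-2\alpha''}/(32 C^2)$; the claimed bound then holds with $\beta'' = 1 - 2\alpha''$ and positive constants $c''_1, c''_2$ depending only on $C$. The only structural difference with Lemma \ref{lem:annexe-concentration2} is the appearance of $M^2$ in place of $M$, which merely doubles the coefficient of $\log n$ in the exponent and does not alter the admissible range of $\alpha''$. Accordingly no genuinely new obstacle arises; the most delicate point is simply verifying that the Lipschitz control still degrades only linearly in $\delta$ when both arguments are perturbed simultaneously, which is exactly what the four-term bound above provides.
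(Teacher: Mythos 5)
Your proposal is correct and follows essentially the same route as the paper: the same $\delta$-covering ${\cal E}^*$ from Lemma \ref{combinatoire}, a union bound over the $M^2$ pairs of net points, Hoeffding's inequality at level $t/2$, and the choice $t=n^{-\alpha''}$ with $\alpha''\in(0,1/3)$, $\beta''=1-2\alpha''$. The only (cosmetic) difference is that you perturb both arguments at once via a symmetric four-term Lipschitz bound giving $16C\delta\le t/2$, whereas the paper replaces $h_1$ and then $h_2$ in two successive steps (thresholds $3t/4$ then $t/2$), each controlled by the one-variable Lipschitz property of Lemma \ref{lem:annexe-concentration2}; the resulting bound $\exp\left(2\left\lceil 64C^2/t\right\rceil\log n-t^2n/(32C^2)\right)$ is the same.
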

\begin{proof}
With the same notations as before, just note that for any mapping $h_1\in{\cal C}^+_{[0,1]}$ (respectively $h_2$), there exists $h^*_1$ (respectively $h^*_2$) in the $\delta$-covering ${\cal E}^*$ of ${\cal C}^+_{[0,1]}$, such that
$$ \|h_1-h_1^*\|_n\leq \delta \qquad\mbox{and}\qquad  \|h_2-h_2^*\|_n\leq \delta.$$
Following the same line as in the proof of the previous lemma, we have, for any mapping $g$ with values in $[-C,C]$, that
$$\vert \Delta_n(g-h_1)-\Delta_n(g-h^*_1)\vert\leq 8C\delta \qquad\mbox{and}\qquad \vert \Delta_n(g-h_2)-\Delta_n(g-h^*_2)\vert\leq 8C\delta.$$
In particular
$$\vert \Delta_n(h_2-h_1)-\Delta_n(h_2-h^*_1)\vert\leq 8C\delta \qquad\mbox{and}\qquad \vert \Delta_n(h^*_1-h_2)-\Delta_n(h^*_1-h^*_2)\vert\leq 8C\delta.$$
Moreover,
$$\vert \Delta_n(h_1-h_2)\vert \leq \vert \Delta_n(h_2-h_1)-\Delta_n(h_2-h^*_1)\vert+\vert\Delta_n(h_2-h^*_1)\vert.$$
Set $\delta=t/(32C)$, then
$$\vert \Delta_n(h_1-h_2)\vert>t \Rightarrow \vert\Delta_n(h_2-h^*_1)\vert>3t/4.$$
In the same manner,
$$\vert \Delta_n(h_2-h^*_1)\vert \leq \vert \Delta_n(h^*_1-h_2)-\Delta_n(h^*_1-h^*_2)\vert+\vert\Delta_n(h^*_1-h^*_2)\vert,$$
and
$$\vert \Delta_n(h_2-h^*_1)\vert>3t/4 \Rightarrow \vert\Delta_n(h^*_1-h^*_2)\vert>t/2.$$
Hence, for any $h_1$ and $h_2$ in ${\cal C}^+_{[0,1]}$, 
$$\P\left(\vert\Delta_n(h_1-h_2)\vert>t\right)\leq \P\left(\max_{h^*_1,h^*_{2}\in{\cal E}^*}\vert\Delta_n(h^*_1-h^*_2)\vert>t/2\right).$$
As a consequence, the choice $\delta=t/(32C)$ gives
\begin{align*}
\P\left(\sup_{h_1\in{\cal C}^+_{[0,1]},h_2\in{\cal C}^+_{[0,1]}}\vert \Delta_n(h_1-h_2)\vert>t\right)&\leq \P\left(\max_{h^*_1,h^*_{2}\in{\cal E}^*}\vert\Delta_n(h^*_j-h^*_{j'})\vert>t/2\right)\\
&\leq \sum_{1\leq j_1\neq j_2\leq M}\P\left(\vert\Delta_n(e^*_j-e^*_{j'})\vert>t/2\right)\\
&\leq M^2\exp\left(-\frac{t^2n}{32C^2}\right).
\end{align*}
According to (\ref{zecnv}), we are led to
\begin{equation}\label{ieuzoc}
\P\left(\sup_{h_1\in{\cal C}^+_{[0,1]},h_2\in{\cal C}^+_{[0,1]}}\vert \Delta_n(h_1-h_2)\vert>t\right)\leq \exp\left(2\left\lceil\frac{64C^2}{t}\right\rceil\log n-\frac{t^2n}{32C^2}\right).
\end{equation}
For any $\alpha''\in(0,1/3)$, there exists a real number $c''_2=c''_2(\alpha'')$ such that for any integer $n$
$$2\left\lceil\frac{64C^2}{n^{-\alpha''}}\right\rceil\log n-\frac{n^{-2\alpha''}n}{32C^2}\leq -c''_2n^{1-2\alpha''},$$
hence the desired result with $t=n^{-\alpha''}$ and $\beta''=1-2\alpha''$. 
\end{proof}

\noindent We conclude this section with the proof of inequality (\ref{zecnv}). It borrows elements from Lemma 3.2 in van de Geer \cite{vdg}. 
 
\begin{lem}\label{combinatoire}
Denote ${\cal C}^+_{[0,1]}$ the set of non-decreasing mappings from $[0,1]$ to $[-C,C]$, and $\|.\|_n$ the empirical norm with respect to the sample $(X_1,\dots,X_n)$. For any $\delta>0$, there exists a $\delta$-covering of $({\cal C}^+_{[0,1]},\|.\|_n)$ with cardinality less than $M=\binom{n+N}{N}$, where $N=\left\lceil\frac{2C}{\delta}\right\rceil$, and $\left\lceil \right\rceil$ stands for the ceiling function.
\end{lem}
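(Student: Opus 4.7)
The strategy is to reduce the covering problem to counting non-decreasing vectors on a finite grid, then apply a standard stars-and-bars count.

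The first observation is that for any $h_1,h_2\in\mathcal{C}^+_{[0,1]}$ the empirical distance $\|h_1-h_2\|_n$ depends only on the values at the sample, namely on the non-decreasing vectors $(h_j(X_{(1)}),\ldots,h_j(X_{(n)}))\in[-C,C]^n$. Hence it is enough to build a $\delta$-covering, in the empirical norm, of the subset of non-decreasing vectors of $[-C,C]^n$; each such vector can afterwards be canonically extended to a right-continuous step function in $\mathcal{C}^+_{[0,1]}$.

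Next, I would discretize the range $[-C,C]$ into $N+1$ equally spaced levels $v_j=-C+j\,(2C/N)$ for $j=0,\ldots,N$, with $N=\lceil 2C/\delta\rceil$, so that consecutive levels are at distance at most $\delta$. Define the projection $\pi\colon[-C,C]\to\{v_0,\ldots,v_N\}$ by $\pi(t)=\max\{v_j:v_j\leq t\}$. For any $h\in\mathcal{C}^+_{[0,1]}$, since $\pi$ is non-decreasing, the sequence $(\pi(h(X_{(i)})))_{i=1}^n$ is non-decreasing and takes values in $\{v_0,\ldots,v_N\}$; let $h^*$ be the associated right-continuous step function in $\mathcal{C}^+_{[0,1]}$. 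The pointwise bound $|t-\pi(t)|\leq 2C/N\leq\delta$ yields
\[
\|h-h^*\|_n^2=\frac{1}{n}\sum_{i=1}^n\bigl(h(X_{(i)})-\pi(h(X_{(i)}))\bigr)^2\leq\bigl(2C/N\bigr)^2\leq\delta^2,
\]
so the collection of all such $h^*$ is a $\delta$-covering.

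It remains to count. Each element of the covering is determined by a non-decreasing sequence of length $n$ with values in the $(N+1)$-element set $\{v_0,\ldots,v_N\}$; equivalently, a multiset of size $n$ drawn from $N+1$ symbols. The classical stars-and-bars count gives exactly $\binom{n+N}{N}$ such sequences, which is the desired bound. There is no serious obstacle here: the only point that needs a word of justification is that coordinate-wise rounding preserves monotonicity, and this is immediate from the monotonicity of $\pi$.
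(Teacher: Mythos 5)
Your proof is correct and follows essentially the same route as the paper: discretize the range $[-C,C]$ into $N+1$ grid levels, round the values $h(X_{(i)})$ to the grid (the paper rounds to the nearest level, you take the floor; both preserve monotonicity and give pointwise error at most $\delta$, hence empirical error at most $\delta$), and count non-decreasing sequences of length $n$ over $N+1$ symbols by stars-and-bars, yielding $\binom{n+N}{N}$, exactly as the paper does via the cited combinatorial identity.
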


\noindent \begin{proof} Let us rewrite $X_{(1)}\leq\dots\leq X_{(n)}$ the reordering of the sample $(X_1,\dots,X_n)$ in increasing order. Recall that the empiric norm is defined for any pair of functions $g$ and $h$ in ${\cal C}^+_{[0,1]}$ by
$$\|g-h\|_n=\sqrt{\frac{1}{n}\sum_{i=1}^n(g(X_{(i)})-h(X_{(i)}))^2},$$
Hence, if $|g(X_{(i)})-h(X_{(i)})|\leq \delta$ for all indices $i=1,\cdots,n$, we also have $\|g-h\|_n\leq\delta$.\\

\noindent For the sake of simplicity, let us assume that $N_0=C/\delta$ is an integer and let us consider the following partition of the interval $[-C,C]$
$${\cal S}=\left\{-C=-N_0\delta<-(N_0-1)\delta<\dots<-\delta<0<\delta<\dots<(N_0-1)\delta<N_0\delta=C\right\}.$$   
Let us denote ${\cal I}^+_{[0,1]}$ the set of non-decreasing functions defined on $[0,1]$, with values in ${\cal S}$ and piecewise constant on the intervals $(X_{(i)},X_{(i+1)})$. We also suppose that they are constant on the intervals $[0,X_{(1)}]$ and $[X_{(n)},1]$,  with respective values the ones of $X_{(1)}$ and $X_{(n)}$.\\

\noindent Firstly, it is readily seen that any function $g$ in ${\cal C}^+_{[0,1]}$ may be approximated at a distance less than or equal to $\delta$ with respect to the empirical norm $\|.\|_n$ by a function in ${\cal I}^+_{[0,1]}$ . For this, it indeed suffices to pick at each point $X_{(i)}$ the nearest value of $g(X_{(i)})$ in the partition ${\cal S}$. Secondly, it is well-known in discrete mathematics (see for example Lov{\'a}sz {\it et al.} \cite{lkv}, Theorem 3.4.2) that 
$$\vert{\cal I}^+_{[0,1]}\vert=\binom{n+N}{N}.$$
\end{proof}

\subsection{Proof of Lemma \ref{lem:nick}}\label{lknxn}
Consider the subset ${\cal C}^+_{n,C}$ of ${\cal C}^+_n$ consisting in all vectors whose absolute values of the components are bounded by a real number $C$. Consider $N\in \N$ such that $N\leq n$. For each $j=0,\ldots,N-1$, let us introduce the vector $h^+_j=\left(h^+_j[1],\cdots,h^+_j[n]\right)'$ of $\R^n$ as follows 
\[
h^+_j[i] = \left \{ \begin{array}{lll} 0 & \textrm{ if }  i \leq \lfloor\frac{jn}{N}\rfloor \\
1 & \textrm{ otherwise }& \end{array} \right .  
\]
\noindent and define 
$$H_+=\Ve(h^+_0,\cdots,h^+_{N-1}).$$
Finally, set $\delta=2\sqrt{2}C/\sqrt{N}\geq 2\sqrt{2}C/\sqrt{n}$.

\begin{lem}\label{lem:nick}
With the previous notations, we have for all $f$ in ${\cal C}^+_{n,C}$
$$ \inf_{h\in H_+}\| f - h \|_n \leq \delta.$$
\end{lem}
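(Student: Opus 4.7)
The plan is to exploit the fact that $H_+$ is exactly the set of vectors that are piecewise constant on the $N$ ``blocks'' of indices defined by the breakpoints $\lfloor jn/N\rfloor$, and then to bound the approximation error of such a piecewise-constant approximant by using the monotonicity and boundedness of $f$.

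First, I would partition $\{1,\ldots,n\}$ into the $N$ consecutive blocks
$B_j=\{i:\lfloor jn/N\rfloor<i\leq\lfloor (j+1)n/N\rfloor\}$ for $j=0,\ldots,N-1$ (with $\lfloor Nn/N\rfloor=n$), and check that $|B_j|\leq\lceil n/N\rceil$ for every $j$. Next I would observe that, for any coefficients $a_0,\ldots,a_{N-1}\in\R$, the linear combination $\sum_{k=0}^{N-1}a_kh_k^+$ takes the constant value $\sum_{k=0}^{j}a_k$ on the block $B_j$; conversely, given any target values $c_0,\ldots,c_{N-1}$, setting $a_0=c_0$ and $a_k=c_k-c_{k-1}$ for $k\geq 1$ realizes the piecewise-constant vector with value $c_j$ on $B_j$ as an element of $H_+$. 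Thus $H_+$ coincides with the space of vectors that are constant on each $B_j$.

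I would then choose the specific approximant $h\in H_+$ whose value on block $B_j$ is $c_j:=f[\min B_j]$. Since $f$ is non-decreasing, for every $i\in B_j$ we have
$$0\leq f[i]-h[i]\leq f[\max B_j]-f[\min B_j]=:\Delta_j,$$
so that
$$\|f-h\|_n^2\leq\frac{1}{n}\sum_{j=0}^{N-1}|B_j|\,\Delta_j^2\leq\frac{\lceil n/N\rceil}{n}\sum_{j=0}^{N-1}\Delta_j^2.$$
Because the blocks are ordered and $f$ is monotone, the $\Delta_j$'s are nonnegative and satisfy $\sum_{j=0}^{N-1}\Delta_j\leq f[n]-f[1]\leq 2C$. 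Hence
$$\sum_{j=0}^{N-1}\Delta_j^2\leq\Bigl(\sum_{j=0}^{N-1}\Delta_j\Bigr)^{\!2}\leq 4C^2.$$

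Finally, since $N\leq n$, the elementary bound $\lceil n/N\rceil\leq n/N+1\leq 2n/N$ gives $\lceil n/N\rceil/n\leq 2/N$, and therefore
$$\|f-h\|_n^2\leq\frac{2}{N}\cdot 4C^2=\frac{8C^2}{N}=\delta^2,$$
which yields the claim. There is no serious obstacle here: the only point that warrants care is the identification of $H_+$ as the space of block-constant vectors (a short telescoping argument), after which the estimate reduces to a one-line application of monotonicity plus the trivial inequality $\sum\Delta_j^2\leq(\sum\Delta_j)^2$ for nonnegative terms.
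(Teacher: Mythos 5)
Your proof is correct and takes essentially the same approach as the paper: your block-constant approximant with value $f[\min B_j]$ on each block $B_j$ is exactly the paper's $f_-$, and your estimate $|B_j|/n\leq 2/N$ combined with $\sum_j\Delta_j^2\leq\bigl(\sum_j\Delta_j\bigr)^2\leq 4C^2$ is the same telescoping bound the paper obtains (phrased there via the sandwich $f_-\leq f\leq f_+$ and orthogonality of the increments), yielding the identical constant $\delta^2=8C^2/N$.
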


\noindent \begin{proof}
We denote $f=(f[1],\ldots,f[n])'$, with
$$-C\leq f[1]\leq \dots\leq f[n]\leq C.$$
Set $\alpha_N=f[n]$ and, for $j=0,\dots,N-1$,
$$\alpha_j=\min_{i:h_j^+[i]=1}f[i]$$ 
We define also the vectors $f_-$ and $f_+$ of $H_+$ as follows
$$ f_-=\alpha_0h^+_0+\sum_{j=1}^{N-1}(\alpha_j-\alpha_{j-1})h^+_j$$
and 
$$ f_+=\alpha_1h^+_0+\sum_{j=1}^{N-1}(\alpha_{j+1}-\alpha_j)h^+_{j}.$$
Then we note that $f_-\leq f\leq f_+$, so that
$$\|f-f_-\|^2_n\leq\|f_+-f_-\|^2_n$$
with
\begin{equation}\label{lqBJAC} 
f_+-f_- = \sum_{j=1}^{N-1}(\alpha_j-\alpha_{j-1})(h^+_{j-1}-h^+_j)+(\alpha_N-\alpha_{N-1})h^+_{N-1}.
\end{equation} 
Remark that, for all $j=1,\dots,N-1$,
$$\|h^+_{j-1}-h^+_j\|^2_n\leq \frac{1}{n}\left(\lfloor\frac{jn}{N}\rfloor-\lfloor\frac{(j-1)n}{N}\rfloor\right)\leq\frac{1}{n}\left(\frac{n}{N}+1\right)\leq\frac{2}{N},$$
and $\|h^+_{N-1}\|^2_n\leq 2/N$ as well. Thus, taking into account that the decomposition (\ref{lqBJAC}) is orthogonal, we get
$$\|f_+-f_-\|^2_n\leq \frac{2}{N}\sum_{j=1}^{N}(\alpha_j-\alpha_{j-1})^2=\frac{8C^2}{N}\sum_{j=1}^{N}\left(\frac{\alpha_j-\alpha_{j-1}}{2C}\right)^2.$$
Since $0\leq (\alpha_j-\alpha_{j-1})/(2C)\leq 1$ and $0\leq (\alpha_N-\alpha_{1})/ 2C\leq1$, we have
$$\|f_+-f_-\|^2_n\leq \frac{8C^2}{N} \sum_{j=1}^N\frac{\alpha_j-\alpha_{j-1}}{2C}\leq\frac{8 C^2}{N}.$$
Considering that $\delta^2 =8 C^2/N$, we finally get the desired result, that is
$$\inf_{h\in H_+}\|f-h\|^2_n\leq \delta^2.$$    
\end{proof}

\noindent For the subset ${\cal C}^-_{n,C}$ of ${\cal C}^-_n$, we proceed in the same way. We conclude that there exists a vector space $H_-$ with dimension $N=8C^2/\delta^2$ such that, for all $f$ in ${\cal C}^-_{n,C}$,
$$ \inf_{h\in H^-}\| f - h \|_n \leq \delta.$$

\noindent\textbf{Acknowledgments.} We wish to thank Dragi Anevski and Enno Mammen to have made us aware of reference \cite{anevski2011monotone}. Arnaud Guyader is greatly indebted to Bernard Delyon for fruitful discussions on Von Neumann's algorithm.

\end{document}